\theoremstyle{thmstyleone}%
\newtheorem{theorem}{Theorem}%
\newtheorem{proposition}[theorem]{Proposition}%
\newtheorem{lemma}[theorem]{Lemma}
\theoremstyle{thmstyletwo}%
\newtheorem{remark}{Remark}%
\theoremstyle{thmstylethree}%
\newtheorem{definition}{Definition}%
\DeclareMathOperator\sinc{sinc}
\begin{document}

\title[Convergence of the Waveholtz Iteration on $\mathbb{R}^d$]{Convergence of the Waveholtz Iteration on $\mathbb{R}^d$}

\author*[1]{\fnm{Olof} \sur{Runborg}}\email{olofr@kth.se}

\author[2]{\fnm{Elliot} \sur{Backman}}\email{elliotba@kth.se}

\affil*[1]{\orgdiv{Department of Mathematics}, \orgname{KTH Royal Institute of Technology}, \orgaddress{\street{Lindstedtsvägen 25}, \city{Stockholm}, \postcode{11428}, \country{Sweden}}}

\affil[2]{\orgdiv{Department of Mathematics}, \orgname{KTH Royal Institute of Technology}, \orgaddress{\street{Lindstedtsvägen 25}, \city{Stockholm}, \postcode{11428}, \country{Sweden}}}

\abstract{In this paper we analyse the Waveholtz method, a time-domain iterative method for solving the Helmholtz iteration, in the constant-coefficient case in all of $\mathbb{R}^d$. We show that the difference between a Waveholtz iterate and the outgoing Helmholtz solution satisfies a Helmholtz equation with a particular kind of forcing. 
For this forcing, 
we prove a frequency-explicit estimate in weighted Sobolev norms, that shows a decrease of the differences as $1/\sqrt{n}$ in terms of the iteration number $n$. This guarantees the convergence of the real parts of the Waveholtz iterates to the real part of the outgoing solution of the Helmholtz equation. 
}

\keywords{Helmholtz, iterative methods, convergence proof, wave equation, time-domain method, limiting-absorption principle}

\pacs[MSC Classification]{65M12, 35J05}

\maketitle

\section{Introduction}\label{sec1}

The Helmholtz equation 
\begin{equation} \label{eq:variableHelmholtz}
\nabla\cdot(c^2\nabla u) + \omega^2u = f,\qquad \text{in $\Omega$,}
\end{equation}
subject to appropriate boundary conditions is a useful model for a variety of real-world wave phenomena, such as the propagation of electromagnetic, underwater, or seismic waves. In this paper we consider an iterative
time-domain method called Waveholtz \cite{Waveholtz2020}
for numerically solving the Helmholtz equation. In particular, we present a proof of convergence when the domain $\Omega$ is the full space $\mathbb{R}^d$. Numerical approximation of solutions to the Helmholtz equation presents several difficulties, especially when the frequency parameter $\omega$ is large. Discretisation of such equations produces large linear systems of equations, and solving these systems by use of Krylov space methods is relatively difficult, as the matrices associated to the systems are indeterminate and large. Furthermore standard preconditioners do not work well; see for instance \cite{ErnstGander2012}. These issues have motivated the study of specialised preconditioners of the linear systems, including among others shifted Laplacian preconditioners \cite{EOVshiftedLaplacian},
the analytic incomplete LU preconditioner \cite{GanderNataf}, sweeping preconditioners \cite{EngYingSweeping} as well as methods based on domain decomposition \cite{Stolk,ChenXiang, VionGeuzaine}. Despite the progress that has been made, difficulties remain especially in the high-frequency context, for interior problems, and for problems with high-contrast material parameters. Efficient implementation of high-order versions of the preconditioned methods on large computers is also a challenge.
The Waveholtz method, introduced in \cite{Waveholtz2020}, aims to alleviate some of these issues by transferring the Helmholtz equation into a time-domain setting and working with an associated wave equation. Potential benefits of this approach include the existence of memory-lean, parallelizable, and high-order provably stable numerical methods for solving the wave equation. The time-domain approach 
for solving Helmholtz has also been used
in the Controllability Method \cite{Controllability}
and for preconditioning \cite{Stolk2}.

The Waveholtz method is an iterative method aimed at computing a solution to the Helmholtz equation by finding the fixed point of an affine operator, the evaluation of which requires solving an associated wave equation. Indeed, the method is motivated by the observation that a solution $u$ to the Helmholtz equation formally defines a fixed point of the map $\Pi$ given by 
\begin{equation} \label{eq:PiDef}
\Pi v(x) := \int_0^T K(t)
    w(x, t)  dt,    
\end{equation}
where $T = 2\pi/\omega$ is the period  corresponding to the frequency parameter $\omega$ and the function $w(x, t)$ is the solution to the associated wave equation
\begin{align}
\label{eq:variableWaves}
    \partial_t^2w  &= \nabla \cdot(c^2(x)\nabla w) - f(x)\cos(\omega t),& \qquad (x, t) &\in \Omega \times (0, T), \nonumber\\
    w(x, 0) &= v(x),& x&\in \Omega,
    \\
    \partial_t w(x, 0) &= 0,& x&\in \Omega,
    \nonumber
\end{align}
with boundary conditions consistent with those of \eqref{eq:variableHelmholtz}. The function $K(t)$ is a kernel given by 
\begin{equation} \label{eq:kernel}
K(t) = \frac{2}{T}\Big(\cos(\omega t) - \frac{1}{4}\Big).    
\end{equation}
A function $u$ that solves $(\ref{eq:variableHelmholtz})$ 
then formally defines a fixed point of $\Pi$, since the function $v(x, t) = u(x)\cos(\omega t)$ formally solves the initial-value problem $(\ref{eq:variableWaves})$ for $v(x) = u(x)$ so that one would have
\begin{equation} \label{eq:iterationDef}
\Pi u = u(x)\int_0^T K(t)\cos(\omega t)dt = u.   
\end{equation}
In fact, the choice of $K(t)$ is made precisely so that the integral over $(0, T)$ of $\cos(\omega t)$ against $K(t)$ will equal $1$. The Waveholtz method then consists of iterating the map $\Pi$ from the starting point $u^0 \equiv 0$ to find a sequence of functions 
$$
u^n := \Pi^nu^0
$$ 
which converges towards the fixed point $u$ that solves the Helmholtz equation $(\ref{eq:variableHelmholtz})$.

In this paper we will analyze the convergence of the Waveholtz method.  Earlier work on convergence has considered bounded domains. In \cite{Waveholtz2020} it is shown for homogeneous Dirichlet or Neumann boundary conditions and variable wave speed $c(x)$ that the iterates converge in $H^1$-norm with convergence rate $1-O(\delta^2)$ to the solution of the Helmholtz equation, where $\delta$ is the relative gap between the Helmholtz frequency $\omega$ and the nearest eigenvalue of the operator $-\nabla\cdot(c^2\nabla)$. Additional convergence results for the Dirichlet and Neumann case were given in \cite{AGR2022}. The proofs of these results have relied on the possibility to decompose functions of interest into a sum of eigenfunctions of the operator $-\nabla\cdot(c^2\nabla)$, a strategy which is naturally only available for domains and boundary conditions where the existence of an orthonormal basis of eigenfunctions is guaranteed. This is the case for Neumann or Dirichlet boundary conditions. 
However, such systems of eigenfunctions do not exist for instance in the case of bounded domains with impedance or absorbing conditions. In these cases there are no theoretical convergence results, except in one dimension \cite{AGR2022}. Nonetheless, there is ample numerical evidence of the convergence of the Waveholtz method in more general settings; convergence has been studied numerically for a variety of geometries and boundary conditions \cite{WaveholtzABHS2025, Waveholtz2020, WaveholtzEL, WaveholtzEM}. The observed convergence however remains to be fully explained theoretically.

In this paper we study convergence of Waveholtz for the constant-coefficient Helmholtz equation set in the full space ${\mathbb R}^d$,
\begin{equation} \label{eq:HE}
\Delta u + \omega^2 u = f(x), \qquad x\in \mathbb{R}^d, 
\end{equation}
subject to the Sommerfeld condition
\[
\lim_{|x| \rightarrow \infty} |x|^{\frac{d-1}{2}}\big(\partial_r - i\omega\big)u(x) = 0.
\] 
This model problem shares some similarities to the case of bounded domains
with impedance or absorbing boundary conditions. Crucially neither of these two kinds of problems allows for decomposing functions in $L^2$ into eigenfunctions of the operator $-\nabla \cdot(c^2(x)\nabla)$. The associated wave equation is in this case 
\begin{align}
\label{eq:Waves}
    \partial_t^2w &= \Delta w-f(x)\cos(\omega t),& \qquad (x, t) &\in \mathbb{R}^d \times (0, T), \nonumber\\
    w(x, 0) &= v(x),& x&\in \mathbb{R}^d,
    \\
    \partial_t w(x, 0) &= 0,& x&\in \mathbb{R}^d.
    \nonumber
\end{align}
The main result of this paper is Theorem \ref{thm:convergence} which shows a frequency explicit estimate of the difference between real parts of the Waveholtz iterates and the outgoing solution of \eqref{eq:HE}. In particular, these differences decreases as $n^{-\frac{1}{2}}$ where $n$ is the iteration number, which implies that the Waveholtz method applied
to \eqref{eq:HE} produces iterates whose real parts converge to the real part of the outgoing solution of \eqref{eq:HE}. Note that solutions to \eqref{eq:variableHelmholtz} and \eqref{eq:HE} are not in $L^2(\mathbb{R}^d)$. The results in Theorem \ref{thm:convergence} are therefore given in norms for weighted Sobolev spaces. This introduces some technical difficulties as the domain of the Waveholtz operator $\Pi$ needs to be extended to these spaces in the analysis.

The paper is organized as follows. First in Section \ref{sec:prelim} we introduce some notation, terminology, and results that are used in the proofs that follow. Second, in Section \ref{sec:convergence} we state and prove our convergence result for the Waveholtz iteration. The proof in Section \ref{sec:convergence} makes use of an estimate for solutions of particular kinds of Helmholtz equations. The statement and proof of this estimate is found in Section \ref{sec:HelmholtzEstimate}. Finally, the proof of a technical result used in the proof of Theorem \ref{thm:convergence} is found in Appendix \ref{sec:A1}.

\section{Preliminaries}\label{sec:prelim}

It is well-known that the Helmholtz equation on $\mathbb{R}^d$ in general does not have solutions in the usual Sobolev space $H^1(\mathbb{R}^d)$, so that in order to analyse the Helmholtz equation on an unbounded domain it is required to define appropriate weighted function spaces. The approach that we will take is that of Agmon in \cite{Agmon}, in which the following spaces are defined:
\begin{definition}
    \label{weightedLp}
    The \emph{weighted $L^p$ space} $L^p_s(\mathbb
    {R}^d)$
    is defined by 
    \[
    L^p_s(\mathbb
    {R}^d) = \Big\{ f  : \big\|f \langle x\rangle^s\big\|_{L^p(\mathbb{R}^d)} < \infty \Big\},
    \]
    where $\langle x \rangle := (1 + |x|^2)^{-1/2}$. For $1\leq p\leq \infty$ the vector space is a Banach space with norm 
    \[
    \|f\|_{L^p_s(\mathbb{R}^d)} := \|f \langle x\rangle^s\|_{L^p(\mathbb{R}^d)}.
    \]
    The \emph{weighted Sobolev spaces} $H^{k}_s(\mathbb{R}^d)$ are defined similarly by 
    \[
    H^{k}_s(\mathbb{R}^d) = \Big\{ f : \big\|f \big\|_{H^{k}_s(\mathbb{R}^d)} < \infty \Big\},
    \]
    with norm 
    \[
    \|f\|_{H^k_s(\mathbb{R}^d)} = \bigg(\sum_{|\alpha|\leq k} \|\partial^\alpha f\|^2_{L^2_s(\mathbb{R}^d)}\bigg)^{\frac{1}{2}}.
    \]
\end{definition}
We will denote by $\hat{f}$ the Fourier transform of $f\in L^2(\mathbb{R}^d)$, using the convention
\[
\hat{f}(\xi) = \int_{\mathbb{R}^d} f(x) \exp(-2\pi i \xi x)dx.
\]
We also denote by $C^\infty_0(\Omega)$ the set of compactly supported smooth functions on the open set $\Omega \subset \mathbb{R}^d$. The reader may recall that there are several equivalent norms one might use to define the spaces $H^k(\mathbb{R}^d)$. We will occasionally use the norm given by the relationship 
\begin{equation} \label{eq:SobolevWeightedLpEquiv}
    \|f\|_{H^k(\mathbb{R}^d)} = \|\langle\xi\rangle^k\hat{f}(\xi)
\|_{L^2(\mathbb{R}^d)},
\end{equation}
for $k \in \mathbb{R}$. Using this convention it becomes clear from Definition \ref{weightedLp} that $f \in H^s(\mathbb{R}^d)$ precisely when $\hat{f} \in L^2_s(\mathbb{R}^d)$. We also note that for $s_2\leq s_1,$ we have
    \[
    L^2_{s_1}(\mathbb{R}^d) \subset L^2_{s_2}(\mathbb{R}^d),
    \]
where the embeddings are continuous. This provides some intuition about the spaces: the larger the value $s$ is, the more restrictive the space $L^2_{s}(\mathbb{R}^d)$ becomes, as in order for the norm $\|\cdot\|_{L^2_{s}(\mathbb{R}^d)}$ to be finite a function in $L^2_{s}(\mathbb{R}^d)$ must decay rapidly enough to compensate for the weight $\langle x \rangle^s$, which grows on the order of $|x|^s$. The case $s = 0$ produces the standard $L^2$-space, whereas a negative weight $-s$ allows for a certain growth of the functions $f\in L^2_{-s}(\mathbb{R}^d)$, which can be controlled by the weight $\langle x \rangle^{-s}$. We shall also make use of the following trace estimate.
\begin{proposition}[Theorem 9.4 in \cite{LionsMagenes}]\label{prop:traceEst}
    Suppose $\Omega \subset \mathbb{R}^d$ is an open bounded subset such that its boundary $\partial \Omega$ is a $(d-1)$-dimensional smooth manifold. Then if $s > \frac{1}{2}$ the \emph{trace operator} $\tau: H^s(\Omega) \longrightarrow H^{s-\frac{1}{2}}(\partial\Omega)$ is bounded, satisfying
    \begin{equation*} 
        \|\tau v \|_{H^{s-\frac{1}{2}}(\partial \Omega)} \leq K\| v \|_{H^{s}(\Omega)},
    \end{equation*}
    for some $K \in \mathbb{R}^+$ and every $v\in H^s(\Omega)$.
\end{proposition}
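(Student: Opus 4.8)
The plan is to reduce the statement to an elementary Fourier-analytic computation on the flat half-space and then transfer it to a general smooth domain by localization. The key special case is the flat boundary: I would first establish that for $u \in H^s(\mathbb{R}^d)$ with $s > \tfrac12$, the restriction $(\tau u)(x') = u(x', 0)$ to the hyperplane $\{x_d = 0\} \cong \mathbb{R}^{d-1}$ satisfies $\|\tau u\|_{H^{s-1/2}(\mathbb{R}^{d-1})} \le C_s \|u\|_{H^s(\mathbb{R}^d)}$, and then bootstrap to the half-space and to $\Omega$.

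To prove this flat estimate I would work on the Fourier side, splitting the frequency variable as $\xi = (\xi', \xi_d)$. The tangential Fourier transform of the trace is obtained by integrating out $\xi_d$,
\[
\widehat{\tau u}(\xi') = \int_{\mathbb{R}} \hat u(\xi', \xi_d)\, d\xi_d,
\]
and Cauchy--Schwarz with the weight $\langle \xi\rangle^{s}$ gives
\[
|\widehat{\tau u}(\xi')|^2 \le \Big(\int_{\mathbb{R}} \langle\xi\rangle^{2s} |\hat u(\xi', \xi_d)|^2\, d\xi_d\Big)\Big(\int_{\mathbb{R}} \langle\xi\rangle^{-2s}\, d\xi_d\Big).
\]
The substitution $\xi_d = \langle\xi'\rangle\, t$ evaluates the second factor as $C_s \langle\xi'\rangle^{1-2s}$, with $C_s = \int_{\mathbb{R}} (1+t^2)^{-s}\,dt$ finite exactly because $s > \tfrac12$. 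Multiplying by $\langle\xi'\rangle^{2s-1}$ and integrating in $\xi'$ then yields the half-space estimate using the Fourier characterization \eqref{eq:SobolevWeightedLpEquiv} of the Sobolev norms. This is where the hypothesis $s>\tfrac12$ enters decisively: it is precisely the condition for convergence of the $\xi_d$-integral, and the trace is genuinely unbounded otherwise.

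To pass from $\mathbb{R}^d$ to the half-space $\mathbb{R}^d_+$, I would invoke a bounded Sobolev extension operator $E : H^s(\mathbb{R}^d_+) \to H^s(\mathbb{R}^d)$ and note that $\tau v = \tau(Ev)$, so the flat half-space bound follows with an extra factor $\|E\|$. Finally, for a general bounded $\Omega$ with smooth $(d-1)$-dimensional boundary, I would cover $\partial\Omega$ by finitely many charts in which a smooth diffeomorphism flattens the boundary onto a piece of the hyperplane, choose a subordinate partition of unity, pull the problem back to the half-space case in each chart, and sum the local contributions, with $H^{s-1/2}(\partial\Omega)$ understood through the same atlas.

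The main obstacle I anticipate is not the flat computation, which is routine, but the bookkeeping in the localization step for non-integer $s$: one must verify that composition with the chart diffeomorphisms maps $H^s$ boundedly into $H^s$ with norm equivalence, and that the boundary space $H^{s-1/2}(\partial\Omega)$ defined through charts is independent of the chosen atlas. For integer $s$ these are elementary chain-rule estimates, but for fractional $s$ the Sobolev spaces must be handled through the Gagliardo seminorm or via interpolation, and the diffeomorphism invariance then requires a genuine (though standard) argument. Since the statement is quoted here from \cite{LionsMagenes}, in practice I would simply cite that reference for the localization machinery and present only the flat model computation for intuition.
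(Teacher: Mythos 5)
The paper does not prove this proposition at all: it is imported verbatim as Theorem~9.4 of Lions--Magenes and used as a black box (its only role is in the proof of Lemma~\ref{lemma:gLemma}, where it controls $\|\tau\hat f(r\,\cdot)\|_{L^2(\partial B^d)}$). Your proposal therefore cannot coincide with "the paper's proof"; what you have written is the standard textbook argument, and its core is correct. The flat computation is right: with the paper's Fourier convention one indeed has $\widehat{\tau u}(\xi')=\int_{\mathbb R}\hat u(\xi',\xi_d)\,d\xi_d$, Cauchy--Schwarz against the weight $\langle\xi\rangle^{s}$ gives the factor $\int_{\mathbb R}\langle\xi\rangle^{-2s}d\xi_d=C_s\langle\xi'\rangle^{1-2s}$, and the convergence of $C_s=\int(1+t^2)^{-s}dt$ is exactly the hypothesis $s>\tfrac12$, so the estimate $\|\tau u\|_{H^{s-1/2}(\mathbb R^{d-1})}\le C_s^{1/2}\|u\|_{H^s(\mathbb R^d)}$ follows. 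Two points deserve explicit mention if you were to write this out in full: first, for $s>\tfrac12$ the pointwise restriction $u(x',0)$ is not a priori defined on all of $H^s$, so the estimate must be established on a dense class (Schwartz functions) and $\tau$ then extended by continuity; second, the passage to a general $\Omega$ requires, besides the extension operator and the chart/partition-of-unity bookkeeping you describe, the diffeomorphism invariance of the fractional spaces $H^s$ and the atlas-independence of $H^{s-1/2}(\partial\Omega)$, which you correctly identify as the only genuinely non-routine ingredient. Your decision to defer that machinery to Lions--Magenes is reasonable and in effect reproduces what the paper itself does, while your flat-model computation adds the useful content of showing exactly why the threshold $s>\tfrac12$ (which the paper relies on when applying the proposition with $s$ close to $\tfrac32$) is sharp.
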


The following Proposition demonstrates the usefulness of the spaces $H^k_s(\mathbb{R}^d)$ when studying the Helmholtz equation.

\begin{proposition}[Theorem 4.1 (i) in \cite{Agmon}] \label{prop:LAP}
Suppose $s > \frac{1}{2}$, and let 
\[
R(z) := (-\Delta-z)^{-1} : L^2_s(\mathbb{R}^d) \longrightarrow H^2_{-s}(\mathbb{R}^d),
\]
be the resolvent operator of $-\Delta$. For any $\omega^2 \in \mathbb{R}^+$ the limit 
\[
\underset{\mathfrak{Im}\{z\} > 0}{\lim_{z \rightarrow \omega^2}} R(z) =: R(\omega^2),
\]
exists as an element of the space of bounded linear operators between the spaces $L^{2}_s(\mathbb{R}^d)$ and $H^2_{-s}(\mathbb{R}^d)$, endowed with the uniform operator topology.
\end{proposition}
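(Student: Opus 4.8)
This is the limiting absorption principle for the free Laplacian, and I would prove it by passing to the Fourier side. With the convention fixed above, $-\Delta$ is multiplication by $4\pi^2|\xi|^2$, so for $\operatorname{Im} z > 0$ the resolvent $R(z)$ acts as multiplication by $(4\pi^2|\xi|^2 - z)^{-1}$ and is bounded on $L^2$ because the denominator stays away from zero. The entire difficulty is that as $z \to \omega^2 \in \mathbb{R}^+$ the symbol becomes singular on the sphere $\Sigma_\omega = \{\,|\xi| = \omega/2\pi\,\}$; the limiting operator is then no longer bounded on $L^2$, and the assertion is that it is nonetheless bounded from $L^2_s$ into $H^2_{-s}$, with convergence in the uniform operator topology.

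The bridge to the weighted spaces is the dual of the remark recorded above: $f \in L^2_s$ if and only if $\hat f \in H^s$. For $s > 1/2$ this is exactly what lets me invoke the trace estimate of Proposition \ref{prop:traceEst}, which gives that the restriction $\hat f|_{\Sigma_\omega}$ is well defined with
\[
\big\| \hat f|_{\Sigma_\omega} \big\|_{L^2(\Sigma_\omega)} \le K \|\hat f\|_{H^s} = K\|f\|_{L^2_s},
\]
uniformly for the radius in a compact subset of $(0,\infty)$. This is the structural reason that $s > 1/2$ is the sharp threshold. Rather than estimate the weighted norm of $R(z)f$ directly — awkward on the Fourier side, since multiplication by $\langle x\rangle^{-s}$ is nonlocal in $\xi$ — I would control the sesquilinear form obtained from Plancherel,
\[
\langle R(z)f,\, g\rangle = \int_{\mathbb{R}^d} \frac{\hat f(\xi)\,\overline{\hat g(\xi)}}{4\pi^2|\xi|^2 - z}\, d\xi, \qquad g \in L^2_s,
\]
using that $L^2_{-s}$ and $L^2_s$ are in duality through the unweighted pairing. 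The two derivatives required by the $H^2$ target are harmless here: away from $\Sigma_\omega$ the factor $(4\pi^2|\xi|^2 - z)^{-1}$ decays like $|\xi|^{-2}$ and absorbs the weight $\langle\xi\rangle^2$, while near $\Sigma_\omega$ that weight is bounded.

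To pass to the limit I would split the integral with a cutoff supported near $\Sigma_\omega$. On the far region the integrand converges boundedly to $\hat f\,\overline{\hat g}/(4\pi^2|\xi|^2 - \omega^2)$ by dominated convergence, giving a bounded $L^2 \to H^2$ contribution. On the near region I would introduce coordinates $(\rho, \sigma)$ with $\rho$ the signed distance to $\Sigma_\omega$ and $\sigma \in \Sigma_\omega$, reducing the singular part to a one-dimensional integral of the form $\int \phi(\rho)/(\rho - i\epsilon')\, d\rho$, where $\phi$ is assembled from the traces of $\hat f$ and $\overline{\hat g}$ on the nearby spheres. The Sokhotski–Plemelj formula then yields, as $z \to \omega^2$ from the upper half-plane, a principal-value part plus $i\pi$ times the on-sphere pairing $\int_{\Sigma_\omega} \hat f\,\overline{\hat g}$; both are bounded by $\|\hat f|_{\Sigma_\omega}\|_{L^2}\,\|\hat g|_{\Sigma_\omega}\|_{L^2} \lesssim \|f\|_{L^2_s}\|g\|_{L^2_s}$ through the trace estimate. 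Combining the two regions gives $|\langle R(\omega^2)f, g\rangle| \lesssim \|f\|_{L^2_s}\|g\|_{L^2_s}$, that is, $R(\omega^2) : L^2_s \to H^2_{-s}$ is bounded.

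Convergence in the uniform operator topology follows by applying the same estimates to the difference $R(z) - R(\omega^2)$: the near-sphere contribution carries an extra factor that vanishes as $z \to \omega^2$ — quantitatively, a modulus-of-continuity bound for the one-dimensional singular integral, uniform over the unit ball of $L^2_s$ — and the far contribution tends to zero by dominated convergence with a uniform majorant. I expect the main obstacle to be precisely this near-sphere analysis: making the one-dimensional singular-integral limit both quantitative and uniform, and checking that the trace bound stays uniform as the radius of $\Sigma_\omega$ moves with $z$. Everything else — the reduction by Plancherel, the harmlessness of the two derivatives, and the far-region convergence — is routine once the trace estimate of Proposition \ref{prop:traceEst} is available.
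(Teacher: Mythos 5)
The paper does not prove this proposition at all: it is imported verbatim as Theorem 4.1~(i) of Agmon's paper \cite{Agmon}, so there is no internal proof to compare against. Your sketch is, in essence, a reconstruction of Agmon's own argument --- pass to the Fourier side, observe that $f\in L^2_s$ means $\hat f\in H^s$, use the $s>\tfrac12$ trace theorem to restrict $\hat f$ to the singular sphere, test $R(z)f$ against $g\in L^2_s$ through the unweighted duality, split the frequency integral into a region away from the sphere (harmless, dominated convergence) and a collar around it (Sokhotski--Plemelj), and note that the $H^2$ gain is free because $\xi^\alpha/(4\pi^2|\xi|^2-z)$ is a bounded multiplier for $|\alpha|\le 2$ once the singular shell is handled. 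That outline is correct and is the standard route.

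The one place where your sketch is genuinely incomplete --- and you half-acknowledge it --- is the near-sphere analysis. For the principal-value integral $\mathrm{p.v.}\int\phi(\rho)/\rho\,d\rho$ to exist and be bounded, and for the convergence $\int\phi(\rho)/(\rho-i\epsilon')\,d\rho\to \mathrm{p.v.}+i\pi\phi(0)$ to be \emph{uniform over the unit ball of} $L^2_s\times L^2_s$, it is not enough that $\rho\mapsto\phi(\rho)$ be bounded by the trace estimate at each fixed radius; you need H\"older continuity of the map $\rho\mapsto\hat f\restriction_{\Sigma_{\rho}}$ (as an $L^2$-of-the-sphere valued function) with a modulus controlled by $\|f\|_{L^2_s}$. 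This H\"older-continuity-of-traces lemma, with exponent roughly $\min(s-\tfrac12,1)$, is the actual technical heart of Agmon's proof and does not follow from Proposition~\ref{prop:traceEst} alone, which only gives a pointwise-in-radius bound. Without it the decomposition into ``principal value plus $i\pi$ times the on-sphere pairing'' is formal, and the uniform-operator-topology convergence (the part of the statement the paper actually relies on in Lemma~\ref{le:outgoing} and Section~\ref{sec:ThmProofStep4}) is unproved. So: right strategy, correctly identified threshold $s>\tfrac12$, but the decisive estimate is named as an ``obstacle'' rather than established.
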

The proposition shows that any source term $f \in L^2_s(\mathbb{R}^d)$ in the Helmholtz equation on $\mathbb{R}^d$ produces a  solution $u = R(\omega^2)f \in H^2_{-s}(\mathbb{R}^d)$. Solutions $u$ defined in this way will be called \textit{outgoing}. These solutions are precisely those satisfying the Sommerfeld condition 
\[
\lim_{|x| \rightarrow \infty} |x|^{\frac{d-1}{2}}\big(\partial_r - i\omega\big)u(x) = 0.
\]
That the limit $R(z) \longrightarrow R(\omega^2)$ in the proposition exists in the uniform operator topology means that
\[
\|R(z) - R(\omega^2)\| \longrightarrow 0,
\]
as $z \longrightarrow \omega^2$ with $z$ such that $\mathfrak{Im}\{z\} > 0$, where $\| \cdot \|$ here denotes the standard norm on the space of bounded linear operators from $L^{2}_s(\mathbb{R}^d)$ into $H^2_{-s}(\mathbb{R}^d)$. This is a version of the so-called \emph{limiting absorption principle}: the operator $-\Delta-\omega^2$ has a well-defined resolvent which can be understood as the limit as $\alpha \rightarrow 0^+$ of resolvents of the type $\big(-\Delta-(\omega^2 - i\alpha)\big)^{-1}$. 

\section{Convergence of Waveholtz} \label{sec:convergence}

In the section we will prove the convergence of Waveholtz iterates in the open case \eqref{eq:HE} where the Helmholtz solution  is sought in the whole space $\mathbb{R}^d$. The main result is Theorem~\ref{thm:convergence} which gives a frequency-explicit estimate of the real part of the error in iteration $n$. In what follows we denote by $f$ the forcing used in \eqref{eq:HE} and by $u$ the corresponding outgoing solution. We denote by $u^n$ the the Waveholtz iterates defined by
\[
    u^{n+1} = \Pi u^n, \qquad u^0(x) \equiv 0,
\]
with $\Pi$ as in \eqref{eq:iterationDef}. Each iterate produces a corresponding error $e^n$ defined by $e^n = u^n - u$. With this we are ready to state our convergence result. 
\begin{theorem}
\label{thm:convergence}
    Suppose $s > \frac{3}{2}$ and $f \in L^2_s(\mathbb{R}^d)$. Then for $\omega \geq 1$ the Waveholtz iterates $u^n$ converge in $H^1_{-s}(\mathbb{R}^d)$-norm to the real part of the outgoing solution $u$ to (\ref{eq:HE}), with the error $e^n$ at the $n$th iteration satisfying
    \[
    \|\mathfrak{Re}\{e^n\}\|_{L^2_{-s}(\mathbb{R}^d)} \leq C\omega^{2s-2}n^{-\frac{1}{2}}\|f\|_{L^2_s(\mathbb{R}^d)},
    \]
    where $C$ is independent of $f$, $u$, $\omega$ and $n$.
    If also $f \in H^1_s(\mathbb{R}^d)$ we have the estimate
    \[
    \|\mathfrak{Re}\{e^n\}\|_{H^1_{-s}(\mathbb{R}^d)} \leq C\omega^{2s-1}n^{-\frac{1}{2}}\|f\|_{H^1_s(\mathbb{R}^d)},
    \]
    for the same $C$.
\end{theorem}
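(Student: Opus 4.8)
The plan is to diagonalize the Waveholtz operator by the Fourier transform and reduce the whole problem to a one-dimensional analysis of a scalar ``filter'' multiplier. First I would solve the wave equation \eqref{eq:Waves} in Fourier space: for each $\xi$ the transform $\hat w(\xi,t)$ solves the forced oscillator $\partial_t^2\hat w + \lambda^2\hat w = -\hat f\cos(\omega t)$ with $\lambda = 2\pi|\xi|$, $\hat w(\xi,0)=\hat v(\xi)$, $\partial_t\hat w(\xi,0)=0$. Solving this ODE and integrating against the kernel $K$ from \eqref{eq:kernel} gives an affine action $\widehat{\Pi v}(\xi) = \beta(\lambda)\hat v(\xi) + \hat g(\xi)$, where $\beta(\lambda) = \int_0^T K(t)\cos(\lambda t)\,dt$ is an explicit elementary function and $\hat g = \frac{\beta(\lambda)-1}{\lambda^2-\omega^2}\hat f$. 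The design of $K$ gives $\beta(\omega)=1$, and since $\beta-1$ vanishes to second order at $\lambda=\omega$ the multiplier $\hat g$ is regular there, which already shows each iterate $u^n$ is well defined and real. I would then establish the properties of $\beta$ that drive everything: $\beta$ is real with $\beta(\omega)=1$, $\beta'(\omega)=0$, a near-resonance expansion $\beta(\lambda)=1-c\,\omega^{-2}(\lambda-\omega)^2+O(\cdot)$, a global bound $|\beta(\lambda)|\le 1$ with equality only at $\lambda=\omega$, and decay $|\beta(\lambda)|\lesssim \omega/\lambda$ at infinity; from these one extracts a Gaussian-type envelope $|\beta(\lambda)|^n\lesssim \exp(-cn\,\omega^{-2}(\lambda-\omega)^2)$ on the relevant range.

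Second I would unwind the iteration. Telescoping the affine recursion from $u^0=0$ gives $\widehat{u^n} = \hat g\,(1-\beta^n)/(1-\beta) = (\beta^n-1)\hat f/(\lambda^2-\omega^2)$, a genuine continuous function for every finite $n$. Comparing with the outgoing solution, whose transform is $\hat u = \hat f\big(\mathrm{p.v.}\tfrac{1}{\omega^2-\lambda^2} - i\pi\delta(\omega^2-\lambda^2)\big)$ by the limiting-absorption description behind Proposition \ref{prop:LAP}, the delta sits entirely in $\mathfrak{Im}\{u\}$ and cancels upon taking real parts. I would thereby obtain the clean identity $\widehat{\mathfrak{Re}\{e^n\}} = \beta(\lambda)^n\,\mathrm{p.v.}\frac{\hat f}{\lambda^2-\omega^2}$. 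Equivalently $\mathfrak{Re}\{e^n\}$ is the principal-value (standing-wave) Helmholtz solution with forcing $f_n$, $\hat f_n = \beta^n\hat f$; this is the ``Helmholtz equation with a particular kind of forcing'' of the abstract, and it is the object estimated in Section \ref{sec:HelmholtzEstimate}.

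Third, the heart of the proof is to bound $\|\mathcal F^{-1}[\beta^n\,\mathrm{p.v.}\tfrac{\hat f}{\lambda^2-\omega^2}]\|_{L^2_{-s}}$ with explicit $\omega$- and $n$-dependence. One cannot simply invoke the uniform resolvent bound of Proposition \ref{prop:LAP}, since controlling $\|f_n\|_{L^2_s}=\|\beta^n\hat f\|_{H^s}$ costs $\xi$-derivatives of $\beta^n$ that grow polynomially in $n$; the decay must instead come from the concentration of $\beta^n$ near the sphere $\lambda=\omega$. Working in spherical coordinates $\xi=\rho\theta$, I would split $\hat f(\rho\theta)=\hat f(\tfrac{\omega}{2\pi}\theta)+[\hat f(\rho\theta)-\hat f(\tfrac{\omega}{2\pi}\theta)]$. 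The difference term vanishes on the sphere, so dividing by $\lambda^2-\omega^2$ leaves a locally bounded factor whose radial integral against $|\beta|^n$ is controlled by the band width $\sim\omega/\sqrt n$, producing the $n^{-1/2}$. The on-sphere term factors into a radial principal-value integral $\mathrm{p.v.}\int \beta^n\,\rho^{d-1}/(\lambda^2-\omega^2)\,d\rho$, again governed by the same width, times the angular norm $\|\hat f(\tfrac{\omega}{2\pi}\,\cdot)\|_{L^2(S^{d-1})}$, which is finite and frequency-explicitly bounded by $\|f\|_{L^2_s}$ precisely through the trace/restriction estimate of Proposition \ref{prop:traceEst} (this is why $s>\tfrac12$, and in fact $s>\tfrac32$ for the $H^1$ version, is needed). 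Assembling the pieces and tracking the weights $\langle\xi\rangle^{\pm s}\sim\omega^{\pm s}$, the sphere measure $\sim\omega^{d-1}$, and the band width $\omega/\sqrt n$ yields the stated $\omega^{2s-2}n^{-1/2}$ bound; the $H^1_{-s}$ estimate follows by carrying one extra factor $\langle\xi\rangle\sim\omega$ and one extra $\xi$-derivative, which requires $f\in H^1_s$ and costs the additional power of $\omega$.

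I expect the main obstacle to be exactly this Section \ref{sec:HelmholtzEstimate} estimate: turning Agmon's limiting-absorption analysis into a quantitative, frequency-explicit bound, rigorously handling the principal value and the restriction of $\hat f$ to the sphere via Proposition \ref{prop:traceEst}, and extracting the sharp $n^{-1/2}$ with the correct powers of $\omega$. Secondary technical points are the rigorous extension of $\Pi$ to the weighted spaces $H^1_{-s}$ (flagged in the introduction), the justification of the Fourier-side manipulations with the distributional limiting-absorption resolvent, and the elementary but necessary verification of the bounds on $\beta$ and its Gaussian envelope, plausibly the technical result deferred to Appendix \ref{sec:A1}.
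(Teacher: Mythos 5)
Your proposal is correct and follows essentially the same route as the paper: the error is identified as (the real part of) the outgoing Helmholtz solution with forcing $\mathcal{F}^{-1}[\beta^n\hat f\,]$, and the $n^{-1/2}$ rate comes from the concentration of $\beta^n$ in a band of width $\sim\omega/\sqrt{n}$ about the resonant sphere, with the principal-value singularity tamed by restricting $\hat f$ to spheres via the trace estimate and by the near-symmetry of $\beta$ about $\lambda=\omega$ (its odd part vanishes to third order there). The differences are organizational rather than substantive --- the paper reaches the error equation through an operator-theoretic extension of $\mathcal{S}$ and a limiting-absorption argument instead of Fourier telescoping, and bounds the singular integral by duality against $v=\langle x\rangle^{-2s}\mathfrak{Re}\{u\}$ using symmetric difference quotients rather than by subtracting the on-sphere value of $\hat f$ --- except for one small misattribution: $s>\tfrac32$ is already required for the $L^2_{-s}$ estimate, not only the $H^1_{-s}$ one, since the Lipschitz/difference-quotient control of the sphere restriction of $\hat f$ costs one full weight on top of the $s>\tfrac12$ needed for the trace.
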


\begin{remark}
The frequency-explicit result in Theorem~\ref{thm:convergence} can be used to estimate how the convergence rate depends on $\omega$. We note first that the solution $u$ also depends on $\omega$ so to be able to compare errors for different frequencies we will choose the source $f$ of the form
$$
 f(x) = \omega^{\frac{3+d}{2}}S(\omega x),
$$
with $S\in H^1_s({\mathbb R}^n)$ for some $s>3/2$. In the high-frequency regime $\omega\to\infty$, this source concentrates in the origin and generates a solution of size $O(1)$ in $\omega$ for constant coefficients; see for instance scaling discussions in \cite{CasPerRun:02}. Following the same derivation as in Section~\ref{sec:hfest} we get $g=\omega^{\frac{d-1}{2}}S(x)$ and the estimate \eqref{eq:gest} for $g$ is then replaced by
    \begin{equation*}
    \|g\|_{L^2_{s}(\mathbb{R}^d)} = \omega^{\frac{d-1}{2}}\|S\|_{L^2_s(\mathbb{R}^d)}. 
    \end{equation*}
The final estimate becomes
    \[
    \|\mathfrak{Re}\{e^n\}\|_{L^2_{-s}(\mathbb{R}^d)} \leq 
C\omega^{s-\frac12}
{\mathcal N}_\omega(\beta^n)
\|S\|_{L^2_{s}(\mathbb{R}^d)}
\leq 
C\omega^{s-\frac12}n^{-\frac{1}{2}}\|S\|_{L^2_s(\mathbb{R}^d)}.
    \]
Since the solution is $O(1)$ in $\omega$ this error corresponds to the relative error. It thus scales as $\omega^{s-1/2}n^{-1/2}$, and to reduce it below a fixed given tolerance when $\omega$ grows, $n$ needs to scale as $n\sim \omega^{2s-1}$. Since $s$ must be larger than $3/2$ in Theorem~\ref{thm:convergence}, the best scaling that can be proved is $n\sim \omega^{2^+}$. However, in numerical computations on finite domains with absorbing type boundary conditions, the scaling $n\sim \omega$ is typically observed. We therefore conjecture that the optimal limit for $s$ in Theorem~\ref{thm:convergence} is actually $s>1$, as this would give the scaling $n\sim \omega^{1^+}$.
\end{remark}

\begin{remark} 
To further understand the error we suppose that the source $S$ has compact support in $|x|\leq 2\pi $, i.e. that $f$ has compact support in $|x|\leq T=2\pi/\omega$. We then note that because of finite speed of propagation, the Waveholtz approximation $u^n(x)$ will be identically zero outside the the ball $|x|\leq (n+1)T$. The total error in $u^n$ will therefore be composed of an $O(1)$ exterior error, outside that ball, and an interior error, inside the ball. Assuming $|u|\sim 1/{|x|^{(d-1)/2}}$, the exterior error will be
$$
  ||u_n - u||_{L^2_{-s}(|x|>(n+1)T)}
  \sim \left(\int_{(n+1)T}^\infty
  \frac{r^{d-1} r^{-2s}dr}{r^{d-1}}\right)^{\frac12}
  = \frac{((n+1)T)^{-s+\frac12}}
  {\sqrt{2s-1}}
  \sim \omega^{s-\frac12}n^{-s+\frac12}.
$$
The exterior error provides a lower bound for the total error. Comparing it to the upper bound of the total error derived in the previous remark, we note that the two errors scale in the same way with $\omega$ and $n$ when $s=1$. The scaling of the error estimate would thus be optimal in this case, but again require that $s>1$ was the limit in Theorem~\ref{thm:convergence}.
\end{remark}

\subsection{Proof of Theorem 3.1}
The proof of Theorem 3.1 is given in two steps. First we show that the error $e^n$ is the outgoing solution to a Helmholtz equation
\begin{equation} \label{eq:ErrorEquation}
\Delta e^n + \omega^2 e^n = F_n,  
\end{equation}
with a source term $F_n$ of a certain form which will be given shortly.  Second, we apply an estimate derived in Section \ref{sec:HelmholtzEstimate} for the Helmholtz equation with this type of source term, which allows us to estimate $e^n$.

\subsubsection{The error equation}
To derive \eqref{eq:ErrorEquation} we show a sequence of short lemmata.
\begin{lemma} \label{le:regularity}
    Suppose $g \in L^2(\mathbb{R}^d)$ and $\alpha > 0$. Then a solution to 
    \[
    \Delta v + \omega^2 v +i\omega\alpha v = g,
    \]
    satisfies $v \in H^2(\mathbb{R}^d)$.
\end{lemma}

\begin{proof}
    If we apply the Fourier transform to the equation we find
    \[
    \hat{v}(\xi) = \frac{\hat{g}(\xi)}{-|\xi|^2 + \omega^2 + i\alpha\omega}.
    \]
    By the Plancherel theorem
    \begin{align*}
        \|v\|_{H^2(\mathbb{R}^d)}^2 &= \int_{\mathbb{R}^d}(1 + |\xi|^2)^2|\hat{v}|^2d\xi = \int_{\mathbb{R}^d}\frac{(1+|\xi|^2)^2}{(\omega^2 - |\xi|^2)^2 + \alpha^2\omega^2} |\hat{g}|^2 d\xi 
        \\
        &\leq C(\alpha, \omega)\int_{\mathbb{R}^d}|\hat{g}|^2d\xi =C(\alpha, \omega)\|g\|_{L^2(\mathbb{R}^d)}^2.
    \end{align*}
    Since $\|g\|_{L^2(\mathbb{R}^d)} < \infty$ by assumption, we conclude that $v \in H^2(\mathbb{R}^d)$.
\end{proof}

Key to the analysis of the Waveholtz iteration is the operator $\mathcal{S}$ defined on $C^\infty_0(\mathbb{R}^d)$ by
\begin{equation}\label{eq:Sdef}
\mathcal{S}v(x) =\int_0^T K(t)w(x, t)dt,
\end{equation}
where $w(x, t)$ solves the initial-value problem
\begin{align}
\label{eq:systemS}
    \partial_t^2w &= \Delta w,& \qquad (x, t) &\in \mathbb{R}^d \times (0, T), \nonumber\\
    w(x, 0) &= v(x),& x&\in \mathbb{R}^d,
    \\
    \partial_t w(x, 0) &= 0,& x&\in \mathbb{R}^d.
    \nonumber
\end{align}
We note that the operator $\mathcal{S}$ is a special case of the operator $\Pi$, corresponding to a source term $f = 0$ on the domain $\mathbb{R}^d$. In the analysis of the Waveholtz iteration on an open bounded domain $\Omega$ it is sufficient to find a continuous extension of this operator into $L^2(\Omega)$ or $H^1(\Omega)$, which is relatively simple to do. In the unbounded case, however, we will need to extend the operator $\mathcal{S}$ into the weighted spaces in which we expect to find our solutions to the Helmholtz equation. The fact that this is possible is demonstrated in the next proposition.

\begin{proposition} \label{prop:extensionS}
The operator $\mathcal{S}$ extends uniquely to a bounded linear operator
from $L^2_s(\mathbb{R}^d)$ to $L^2_s(\mathbb{R}^d)$ and from $H^p_s(\mathbb{R}^d)$ to $H^p_s(\mathbb{R}^d)$ for all $s \in \mathbb{R}$ and postive integers $p$. Furthermore, functions $v$ in $L^2(\mathbb{R}^d)$ satisfy 
    \begin{equation} \label{eq:SFourier0}
    \widehat{\mathcal{S}v}(\xi) = \beta(|\xi|)\hat{v},(\xi)    
    \end{equation}
where 
    \begin{equation} \label{eq:betaDef}
    \beta(\lambda) = \int_0^TK(t)\cos(\lambda t)dt.    
    \end{equation}
Provided the source term in \eqref{eq:HE}
satisfies
$f \in L^2_t(\mathbb{R}^d)$ and $t > \frac{1}{2}$, the operator $\Pi$ also extends as an operator on $H^p_s(\mathbb{R}^d)$, and
\begin{equation} \label{eq:PiSrelationship}
\Pi v = \mathcal{S}(v-u) + u,    
\end{equation}
for the outgoing solution $u$ to \eqref{eq:HE}.
\end{proposition}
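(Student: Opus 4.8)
The plan is to treat $\mathcal{S}$ first as a radial Fourier multiplier and then, separately, to read off the affine structure that relates $\Pi$ to $\mathcal{S}$. For $v \in C^\infty_0(\mathbb{R}^d)$ the wave problem \eqref{eq:systemS} is classically solvable, and applying the Fourier transform in $x$ turns it into the ODE $\partial_t^2\hat{w}(\xi,t) = -(2\pi|\xi|)^2\hat{w}(\xi,t)$ with $\hat{w}(\xi,0)=\hat{v}(\xi)$ and $\partial_t\hat{w}(\xi,0)=0$, whose solution is $\hat{w}(\xi,t)=\cos(2\pi|\xi|t)\hat{v}(\xi)$. Multiplying by $K(t)$ and integrating over $(0,T)$ then gives the representation \eqref{eq:SFourier0}, with symbol $m(\xi):=\beta(2\pi|\xi|)$ (the precise constant in the argument is immaterial below). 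Since $|\beta(\lambda)|\leq \int_0^T|K(t)|\,dt<\infty$ uniformly in $\lambda$, Plancherel's theorem immediately yields boundedness of $\mathcal{S}$ on $L^2=L^2_0$, with an operator norm that is in fact uniform in $\omega\geq 1$.

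The heart of the matter is the extension to the weighted spaces, and here I would exploit the duality recorded in Section~\ref{sec:prelim}: the Fourier transform identifies $L^2_s(\mathbb{R}^d)$ isometrically with $H^s(\mathbb{R}^d)$ (equivalently $v\in L^2_s$ iff $\hat{v}\in H^s$), so that $\|\mathcal{S}v\|_{L^2_s}=\|m\,\hat{v}\|_{H^s}$ and the claim $\mathcal{S}\colon L^2_s\to L^2_s$ is equivalent to boundedness of the multiplication operator $M_m\colon g\mapsto m\,g$ on $H^s(\mathbb{R}^d_\xi)$ for every real $s$. The key structural fact is that $m\in C^\infty_b(\mathbb{R}^d)$: differentiating under the integral sign gives $|\beta^{(j)}(\lambda)|\leq T^j\int_0^T|K(t)|\,dt<\infty$ for every $j$, so $\beta$ is smooth and bounded together with all its derivatives; moreover $\beta$ is even, whence $m(\xi)=\beta(2\pi|\xi|)$ is smooth near the origin (it is a smooth function of $|\xi|^2$), while for $|\xi|\geq 1$ the chain rule expresses $\partial^\gamma m$ as a finite sum of products of bounded factors $\beta^{(j)}(2\pi|\xi|)$ with derivatives of $|\xi|$ that are homogeneous of non-positive degree and hence bounded. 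Granting $m\in C^\infty_b$, boundedness of $M_m$ on $H^s$ follows from the Leibniz rule for nonnegative integer $s$, from interpolation for general $s\geq 0$, and from duality (using $M_m^*=M_{\bar m}$ with $\bar m$ also in $C^\infty_b$) for $s<0$. The bound on $H^p_s$ is then automatic: $\mathcal{S}$ commutes with every $\partial^\alpha$ (both sides have Fourier symbol $(2\pi i\xi)^\alpha m(\xi)$), so $\|\mathcal{S}v\|_{H^p_s}^2=\sum_{|\alpha|\leq p}\|\mathcal{S}\partial^\alpha v\|_{L^2_s}^2\leq\|M_m\|_{\mathcal{B}(H^s)}^2\|v\|_{H^p_s}^2$. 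All estimates are first proved on $C^\infty_0(\mathbb{R}^d)$, which is dense in $L^2_s$ and $H^p_s$, so the bounded extension is unique.

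For the statement about $\Pi$ I would isolate the affine structure of the map. By linearity of the wave equation in its data and forcing, the solution of \eqref{eq:Waves} with initial value $v$ splits as the solution of \eqref{eq:systemS} with data $v$ plus the solution with zero data and forcing $-f\cos(\omega t)$; applying the kernel gives $\Pi v=\mathcal{S}v+\Pi 0$, so $\Pi$ is affine with linear part $\mathcal{S}$ and extends by \eqref{eq:PiSrelationship} to a continuous affine operator on the weighted Sobolev spaces containing $u$. To identify the constant part I would verify that $u$ is a fixed point. Under the hypothesis $f\in L^2_t(\mathbb{R}^d)$ with $t>\tfrac12$, Proposition~\ref{prop:LAP} furnishes the outgoing solution $u=R(\omega^2)f\in H^2_{-t}(\mathbb{R}^d)$ of \eqref{eq:HE}. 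A direct computation shows that $W(x,t)=u(x)\cos(\omega t)$ solves \eqref{eq:Waves} with initial value $u$: indeed $\partial_t^2 W=-\omega^2 u\cos(\omega t)$ while $\Delta W-f\cos(\omega t)=(\Delta u-f)\cos(\omega t)=-\omega^2 u\cos(\omega t)$ by \eqref{eq:HE}, and $W(x,0)=u$, $\partial_t W(x,0)=0$. Hence $\Pi u=u\int_0^T K(t)\cos(\omega t)\,dt=u$ by the normalisation \eqref{eq:iterationDef}. Combining $\Pi u=\mathcal{S}u+\Pi 0=u$ identifies $\Pi 0=(I-\mathcal{S})u$, and substituting back yields $\Pi v=\mathcal{S}v+(I-\mathcal{S})u=\mathcal{S}(v-u)+u$, which is \eqref{eq:PiSrelationship}.

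The step I expect to cause the most trouble is this last identification, specifically the claim that $W=u\cos(\omega t)$ is the relevant solution of the forced wave equation. Because $u$ lies in the negative-weight space $H^2_{-t}$ and not in $L^2$, the usual $L^2$ energy argument for uniqueness of wave solutions does not apply directly, so one must justify that the candidate $W$ genuinely coincides with the solution used to define $\Pi$ (and that $\Pi 0=(I-\mathcal{S})u$ lies in the weighted space on which $\Pi$ is claimed to act); this is presumably where the technical result of Appendix~\ref{sec:A1} enters. On the $\mathcal{S}$ side the only point requiring genuine care is verifying that $m=\beta(2\pi|\xi|)$ is bounded together with all its derivatives uniformly across the transition near $|\xi|=0$, together with the passage from integer to arbitrary real $s$ via interpolation and duality; once $m\in C^\infty_b$ is secured, the multiplier bounds are standard.
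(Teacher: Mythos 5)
Your treatment of $\mathcal{S}$ is correct but follows a genuinely different route from the paper's. The derivation of \eqref{eq:SFourier0} by solving the ODE for $\hat w(\xi,t)$ is the same in both. For the weighted bound, however, you move entirely to the Fourier side: using $\|v\|_{L^2_s}=\|\hat v\|_{H^s}$ you reduce $\mathcal{S}\colon L^2_s\to L^2_s$ to boundedness of multiplication by $m(\xi)=\beta(|\xi|)$ on $H^s(\mathbb{R}^d_\xi)$, and then invoke $m\in C^\infty_b$ (the evenness of $\beta$ does make $m$ smooth across the origin) together with Leibniz, interpolation and duality. This is valid and shorter, but it leans on the multiplication theorem on $H^s$ for non-integer and negative $s$, which is standard yet not elementary. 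The paper instead stays on the physical side: it covers $\mathbb{R}^d$ by annuli of width comparable to $T$, takes a partition of unity with $j$-independent derivative bounds, and uses finite speed of propagation --- if $u$ is supported in one annulus then $\mathcal{S}u$ is supported in a slightly enlarged one --- so that $\langle x\rangle^{2s}$ is comparable to a constant on each piece and the unweighted bound $\|\beta\|_{L^\infty}\le 1$ transfers directly. That argument uses no smoothness of $\beta$ at all and no interpolation, only $L^2$-boundedness plus locality of the wave propagator; it is the more self-contained of the two. Your reduction of the $H^p_s$ case to the $L^2_s$ case via commutation of $\mathcal{S}$ with $\partial^\alpha$ is fine and is essentially how the paper handles it as well.

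The gap is in the identification $\Pi 0=(I-\mathcal{S})u$, and you have correctly located it yourself. You verify the fixed-point property $\Pi u=u$ by exhibiting the separable function $W=u\cos(\omega t)$, but for the extended operator $\Pi u$ is \emph{defined} as $\mathcal{S}u+\Pi 0$, with $\mathcal{S}u$ obtained as a limit of $\mathcal{S}u_k$ for $u_k\in C^\infty_0(\mathbb{R}^d)$ approximating $u$, not as $\int_0^TK(t)W\,dt$ for your candidate $W$. Since $u$ lies only in $H^2_{-t}(\mathbb{R}^d)$ and not in $L^2(\mathbb{R}^d)$, equating the two requires existence, uniqueness and continuous dependence for weak solutions of the wave equation with polynomially weighted data, which the usual $L^2$ energy method does not provide and which you do not supply; this is precisely the formal computation \eqref{eq:iterationDef} that the Appendix is written to make rigorous. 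The paper avoids the issue by regularizing: it takes the damped solutions $u_\alpha\in H^2(\mathbb{R}^d)$ from Lemma \ref{le:regularity}, for which $r=u_\alpha\,\mathfrak{Re}\{e^{i\omega_\alpha t}\}$ with $\omega_\alpha=\sqrt{\omega^2+i\alpha\omega}$ is an honest $L^2$-theory solution of a perturbed forced wave problem, writes $\Pi 0=\int_0^TK(t)r\,dt-\mathcal{S}u_\alpha+\int_0^TK(t)(w-r+v)\,dt$, controls the last term by the Duhamel energy estimate because its forcing $f\,\mathfrak{Re}\{e^{i\omega t}-e^{i\omega_\alpha t}\}$ tends to $0$ in $L^2$, and then sends $\alpha\to0^+$ using Proposition \ref{prop:LAP} and the already-established continuity of $\mathcal{S}$ on $H^2_{-t}$. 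To repair your version you would either have to adopt this limiting-absorption regularization, or prove uniqueness of locally square-integrable distributional solutions of the free wave equation (via local energy estimates and finite speed of propagation) so that $W$ minus the forced Duhamel part can be identified with the limit of the free evolutions of the $u_k$; as written, the step ``hence $\Pi u=u$'' is not justified.
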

The proof of this result is given in the Appendix. We also note that the property \eqref{eq:PiSrelationship} implies that $u$ is a fixed point of $\Pi$, since
\[
\Pi u = \mathcal{S}(u - u ) + u = \mathcal{S}0 + u = u,
\]
as $\mathcal{S}$ is a linear operator. Furthermore, since $e^{n+1} = \Pi u^n - u$ by definition, we find
\begin{equation} \label{eq:errorS}
    \mathcal{S}e^n = \mathcal{S}(u^n - u) = \Pi u^n - u = e^{n+1}.
\end{equation}
We now show that the operator $\mathcal{S}$ commutes with the Laplace operator for sufficiently smooth functions.
\begin{lemma} \label{le:commutativity}
    If $v \in H^2(\mathbb{R}^d)$, then
    \[
    \big( \Delta \circ \mathcal{S}\big) v = \big(\mathcal{S} \circ \Delta\big) v
    \]
\end{lemma}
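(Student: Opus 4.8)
The plan is to exploit the fact that both $\mathcal{S}$ and $\Delta$ act as Fourier multipliers, and that multiplier operators automatically commute. By \eqref{eq:SFourier0} in Proposition~\ref{prop:extensionS}, the operator $\mathcal{S}$ is the Fourier multiplier with radial symbol $\beta(|\xi|)$, while with the Fourier convention fixed in Section~\ref{sec:prelim} the Laplacian has symbol $-4\pi^2|\xi|^2$, i.e.\ $\widehat{\Delta v}(\xi) = -4\pi^2|\xi|^2\,\hat v(\xi)$. Since scalar multiplication commutes, the symbols $\beta(|\xi|)$ and $-4\pi^2|\xi|^2$ commute pointwise in $\xi$, and this is exactly what the lemma asserts once transported back through the Fourier transform.

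First I would check that both compositions are well defined for $v \in H^2(\mathbb{R}^d)$. On one hand $\Delta v \in L^2(\mathbb{R}^d)$, so $\mathcal{S}\Delta v$ is defined and \eqref{eq:SFourier0} applies with $\Delta v$ in place of $v$. On the other hand Proposition~\ref{prop:extensionS} (with $s=0$, $p=2$) gives $\mathcal{S}v \in H^2(\mathbb{R}^d)$, so $\Delta(\mathcal{S}v) \in L^2(\mathbb{R}^d)$ is defined as well. Then I would simply compute both sides on the Fourier side. Applying \eqref{eq:SFourier0} and the multiplier form of $\Delta$,
\[
\widehat{(\Delta \circ \mathcal{S})v}(\xi) = -4\pi^2|\xi|^2\,\widehat{\mathcal{S}v}(\xi) = -4\pi^2|\xi|^2\,\beta(|\xi|)\,\hat v(\xi),
\]
while
\[
\widehat{(\mathcal{S}\circ\Delta)v}(\xi) = \beta(|\xi|)\,\widehat{\Delta v}(\xi) = \beta(|\xi|)\,\bigl(-4\pi^2|\xi|^2\bigr)\,\hat v(\xi).
\]
These two expressions agree, since $\beta(|\xi|)$ and $-4\pi^2|\xi|^2$ are scalars. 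As the Fourier transform is injective on $L^2(\mathbb{R}^d)$, equality of the transforms yields $(\Delta\circ\mathcal{S})v = (\mathcal{S}\circ\Delta)v$.

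There is no genuine obstacle here: the content of the lemma is the multiplier representation \eqref{eq:SFourier0}, which is already established in Proposition~\ref{prop:extensionS}. The only point needing care is the bookkeeping of function spaces---verifying that $\Delta v$ still lies in $L^2(\mathbb{R}^d)$ so that \eqref{eq:SFourier0} is applicable to it, and that $\mathcal{S}v$ is regular enough (namely in $H^2$) for $\Delta(\mathcal{S}v)$ to make sense---both of which are immediate from the mapping properties in Proposition~\ref{prop:extensionS}. One could alternatively argue without the explicit symbol by approximating $v$ by Schwartz functions, for which $\mathcal{S}$ and $\Delta$ commute by differentiating under the integral in \eqref{eq:Sdef} and using that the wave propagator commutes with $\Delta$, and then passing to the limit using boundedness of both operators; but the Fourier-multiplier argument is cleaner and sidesteps the density and continuity technicalities.
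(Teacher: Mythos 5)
Your proposal is correct and follows essentially the same route as the paper: verify both compositions are well defined via the mapping properties of $\mathcal{S}$, compute both sides as Fourier multipliers using \eqref{eq:SFourier0}, and conclude by injectivity of the Fourier transform on $L^2(\mathbb{R}^d)$. The only cosmetic difference is that you track the $-4\pi^2|\xi|^2$ symbol consistent with the paper's Fourier convention, where the paper writes $-|\xi|^2$; this has no bearing on the argument.
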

\begin{proof}
    By Proposition \ref{prop:extensionS} we know that for any $v \in H^2(\mathbb{R}^d)$ we have $\mathcal{S}v \in H^2(\mathbb{R}^d)$ as well. This means that $\Delta\mathcal{S}v$ is well-defined and lies in $L^2(\mathbb{R}^d)$. Similarly, since $v \in H^2(\mathbb{R}^d)$ we have $\Delta v \in L^2(\mathbb{R}^d)$ so that by Proposition \ref{prop:extensionS} we have $\mathcal{S}(\Delta v) \in L^2(\mathbb{R}^d)$. We want to show that $\mathcal{S}(\Delta v) = \Delta(\mathcal{S}v)$ as elements of $L^2(\mathbb{R}^d)$. As the Fourier transform is an isometry of the space $L^2(\mathbb{R}^d)$ onto itself, this equality holds precisely if the corresponding equality in the Fourier domain,
    \[
    \widehat{\big(\mathcal{S}(\Delta v)\big)}(\xi) = \widehat{\big(\Delta(\mathcal{S}v)\big)}(\xi),
    \]
    holds. Let us investigate this. On the one hand Proposition \ref{prop:extensionS} tells us that
    \[  
    \widehat{\big(\Delta(\mathcal{S}v)\big)}(\xi) = -|\xi|^2 \widehat{\mathcal{S}v}(\xi) = -|\xi|^2\beta(|\xi|)\hat{v}(\xi),
    \]
    whilst on the other we have 
    \[
    \widehat{\big(\mathcal{S}(\Delta v)\big)}(\xi) = \beta(|\xi|)\widehat{\Delta v}(\xi) = -|\xi|^2\beta(|\xi|)\hat{v}(\xi).
    \]
    We see that the two functions have the same image under the Fourier transform, implying that they must be equal. This concludes the proof.
\end{proof}

\begin{lemma} \label{le:outgoing}
    Suppose $s > \frac{1}{2}$, $g \in L^2_s(\mathbb{R}^d)$. Denote by $v$ and $w$ the outgoing solutions to 
    \[
    \Delta v + \omega^2 v = g,
    \]
    and
    \[
    \Delta w + \omega^2 w = \mathcal{S}g.
    \]
    These exist and lie in $H^2_{-s}(\mathbb{R}^d)$. Moreover, $w = \mathcal{S}v$. 
\end{lemma}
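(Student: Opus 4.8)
The plan is to treat the two assertions separately: first the existence and regularity of the outgoing solutions, which follow immediately from the limiting-absorption principle, and then the identity $w=\mathcal{S}v$, which is the substantive part and which I would obtain by an absorption/limiting argument.

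Since $s>\tfrac12$ and $g\in L^2_s(\mathbb{R}^d)$, Proposition~\ref{prop:LAP} applies directly and gives the outgoing solution $v=-R(\omega^2)g\in H^2_{-s}(\mathbb{R}^d)$. By Proposition~\ref{prop:extensionS} the operator $\mathcal{S}$ is bounded on $L^2_s(\mathbb{R}^d)$, so $\mathcal{S}g\in L^2_s(\mathbb{R}^d)$; a second application of Proposition~\ref{prop:LAP} then produces the outgoing solution $w=-R(\omega^2)\mathcal{S}g\in H^2_{-s}(\mathbb{R}^d)$. This settles existence and regularity, and reduces the identity $w=\mathcal{S}v$ to showing that $\mathcal{S}$ commutes with the resolvent $R(\omega^2)$ on $L^2_s(\mathbb{R}^d)$.

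The obstacle is that the commutation of $\mathcal{S}$ with $\Delta$ in Lemma~\ref{le:commutativity} is available only on the unweighted space $H^2(\mathbb{R}^d)$, whereas the outgoing solutions live in the weighted space $H^2_{-s}(\mathbb{R}^d)$. To bridge this gap I would regularise by absorption. For $\alpha>0$ let $v_\alpha$ and $w_\alpha$ be the unique solutions of the damped equations $\Delta v_\alpha+\omega^2 v_\alpha+i\omega\alpha v_\alpha=g$ and $\Delta w_\alpha+\omega^2 w_\alpha+i\omega\alpha w_\alpha=\mathcal{S}g$; by Lemma~\ref{le:regularity} these lie in $H^2(\mathbb{R}^d)$. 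Because $v_\alpha\in H^2(\mathbb{R}^d)$, applying the linear operator $\mathcal{S}$ to the first equation and invoking the commutativity of Lemma~\ref{le:commutativity} shows that $\mathcal{S}v_\alpha$ solves the damped equation with right-hand side $\mathcal{S}g$. Since the damped operator is invertible on $L^2(\mathbb{R}^d)$ for $\alpha>0$ (its Fourier symbol $-|\xi|^2+\omega^2+i\omega\alpha$ never vanishes), uniqueness forces $w_\alpha=\mathcal{S}v_\alpha$.

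It then remains to pass to the limit $\alpha\to0^+$. The damped solutions are $v_\alpha=-R(\omega^2+i\omega\alpha)g$ and $w_\alpha=-R(\omega^2+i\omega\alpha)\mathcal{S}g$, so by the limiting-absorption principle in Proposition~\ref{prop:LAP} we have $v_\alpha\to v$ and $w_\alpha\to w$ in $H^2_{-s}(\mathbb{R}^d)$. Using that $\mathcal{S}$ is bounded on $H^2_{-s}(\mathbb{R}^d)$ (Proposition~\ref{prop:extensionS} with weight $-s$), continuity gives $\mathcal{S}v_\alpha\to\mathcal{S}v$ in $H^2_{-s}(\mathbb{R}^d)$. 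Combining this with $w_\alpha=\mathcal{S}v_\alpha$, $w_\alpha\to w$, and the uniqueness of limits yields $w=\mathcal{S}v$. I expect the delicate point throughout to be the careful use of the uniform-operator-topology convergence in Proposition~\ref{prop:LAP}, which is exactly what allows the weighted limits to be taken and what the commutativity lemma alone cannot deliver.
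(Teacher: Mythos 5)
Your proposal is correct and follows essentially the same route as the paper: regularise by absorption, use Lemma~\ref{le:regularity} to place $v_\alpha$ in $H^2(\mathbb{R}^d)$ so that Lemma~\ref{le:commutativity} identifies $\mathcal{S}v_\alpha$ as the damped solution with source $\mathcal{S}g$, then pass to the limit via Proposition~\ref{prop:LAP} and the boundedness of $\mathcal{S}$ on $H^2_{-s}(\mathbb{R}^d)$. The only (harmless) difference is that you introduce $w_\alpha$ separately and invoke uniqueness of the damped problem, where the paper simply defines $w_\alpha:=\mathcal{S}v_\alpha$.
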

\begin{proof}
    Let us denote by $v_\alpha$ the solution to
    \[
    \Delta v_\alpha + \omega^2 v_\alpha +i\omega\alpha v_\alpha= g.
    \]
    Lemma \ref{le:regularity} tells us that these $v_\alpha$ lie in $H^2(\mathbb{R}^d)$, so that we by Lemma $\ref{le:commutativity}$ find
    \[
    \mathcal{S}\big(\Delta v_\alpha + \omega^2 v_\alpha + i\omega\alpha v_\alpha\big) = \Delta \mathcal{S}v_\alpha + \omega^2 \mathcal{S}v_\alpha +i\omega\alpha \mathcal{S}v_\alpha= \mathcal{S}g. 
    \]
    That is, the function $w_\alpha = \mathcal{S}v_\alpha$ satisfies
    \[
    \Delta w_\alpha + \omega^2 w_\alpha + i\omega\alpha w_\alpha= \mathcal{S}g.
    \]
    Since $g$ and $\mathcal{S}g$ both lie in $L^2_s(\mathbb{R}^d)$ by Proposition \ref{prop:extensionS}, Proposition \ref{prop:LAP} ensures that  
    \[
    v = \lim_{\alpha \rightarrow 0^+}v_\alpha \qquad \text{and} \qquad w = \lim_{\alpha \to 0^+} \mathcal{S}v_\alpha,
    \]
    exist as functions in $H^2_{-s}(\mathbb{R}^d)$. Furthermore, Proposition \ref{prop:extensionS} tells us that $\mathcal{S}: H^2_{-s}(\mathbb{R}^d) \to H^2_{-s}(\mathbb{R}^d)$ is bounded, which means that
    \[
    w = \lim_{\alpha \rightarrow 0^+} \mathcal{S}v_\alpha = \mathcal{S}\big(\lim_{\alpha \rightarrow 0^+} v_\alpha \big) = \mathcal{S}v.
    \]
    This concludes the proof.
\end{proof}

\begin{proposition} \label{prop:errorEq}
Suppose $f \in L^2_{s}(\mathbb{R}^d)$ with
    $s > \frac{1}{2}$. Then the error $e^n$ of the $n$th Waveholtz iterate is an outgoing solution to \eqref{eq:ErrorEquation}, with  $F_n = -\mathcal{S}^nf$, that is
    \begin{equation} \label{eq:errorEq}
    \Delta e^n + \omega^2 e^n = -\mathcal{S}^nf.
    \end{equation}
    
\end{proposition}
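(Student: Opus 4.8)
The plan is to argue by induction on $n$, exploiting two facts already established: the recursion $e^{n+1} = \mathcal{S}e^n$ recorded in \eqref{eq:errorS}, and the fact that $\mathcal{S}$ carries outgoing solutions to outgoing solutions with a transformed source, which is the content of Lemma~\ref{le:outgoing}. Throughout I would keep track that all the sources remain in $L^2_s(\mathbb{R}^d)$; this follows immediately from the boundedness of $\mathcal{S}$ on $L^2_s(\mathbb{R}^d)$ asserted in Proposition~\ref{prop:extensionS}, so that $\mathcal{S}^nf \in L^2_s(\mathbb{R}^d)$ for every $n$. Proposition~\ref{prop:LAP} then guarantees a unique outgoing solution of each equation $\Delta v + \omega^2 v = -\mathcal{S}^nf$ in $H^2_{-s}(\mathbb{R}^d)$, so the statement to be proved is well-posed at every step.

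For the base case $n=0$, I note that $u^0 \equiv 0$ gives $e^0 = u^0 - u = -u$. Since $u$ is by hypothesis the outgoing solution to $\Delta u + \omega^2 u = f$, linearity shows that $-u$ is the outgoing solution to $\Delta e^0 + \omega^2 e^0 = -f = -\mathcal{S}^0 f$, using the convention $\mathcal{S}^0 = \mathrm{Id}$. This establishes \eqref{eq:errorEq} for $n=0$.

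For the inductive step, I would assume that $e^n$ is the outgoing solution to $\Delta e^n + \omega^2 e^n = -\mathcal{S}^nf$, and set $g := -\mathcal{S}^nf \in L^2_s(\mathbb{R}^d)$. Lemma~\ref{le:outgoing} then states that the outgoing solution $w$ to $\Delta w + \omega^2 w = \mathcal{S}g$ is precisely $w = \mathcal{S}v$, where $v$ is the outgoing solution to $\Delta v + \omega^2 v = g$. By the inductive hypothesis together with uniqueness of the outgoing solution we have $v = e^n$, whence $w = \mathcal{S}e^n$, which equals $e^{n+1}$ by \eqref{eq:errorS}. Therefore $e^{n+1}$ is the outgoing solution to $\Delta e^{n+1} + \omega^2 e^{n+1} = \mathcal{S}g = -\mathcal{S}^{n+1}f$, completing the induction.

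Once Lemma~\ref{le:outgoing} is available the argument is essentially bookkeeping, so I do not anticipate a serious obstacle. The one point requiring care is the appeal to uniqueness of the outgoing solution, which is what lets me identify the abstractly furnished solution $v$ in Lemma~\ref{le:outgoing} with the concrete iterate error $e^n$. This uniqueness is built into Proposition~\ref{prop:LAP}, where $R(\omega^2)$ is a single-valued operator, so that the outgoing solution associated to a given source in $L^2_s(\mathbb{R}^d)$ is the unique element of $H^2_{-s}(\mathbb{R}^d)$ given by applying $R(\omega^2)$ to that source.
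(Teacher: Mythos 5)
Your proposal is correct and follows essentially the same route as the paper: induction starting from $e^0=-u$, using the boundedness of $\mathcal{S}$ on $L^2_s(\mathbb{R}^d)$ from Proposition~\ref{prop:extensionS} to keep the sources admissible, and combining Lemma~\ref{le:outgoing} with the recursion $e^{n+1}=\mathcal{S}e^n$ for the inductive step. Your explicit appeal to uniqueness of the outgoing solution via $R(\omega^2)$ just makes precise a point the paper leaves implicit.
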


\begin{proof}
    By definition $e^0 = u_0-u=-u$, which is the outgoing solution to 
    \[
    \Delta e^0 + \omega^2 e^0 = -f.
    \]
    Since $f \in L^2_{s}(\mathbb{R}^d)$ and $\mathcal{S}$ is bounded on this domain by Proposition \ref{prop:extensionS}, we get $-\mathcal{S}^n f \in L^2_{s}(\mathbb{R}^d)$
    for all $n\geq 0$. Induction
    then gives the result upon observing that if
    $e^n$ is an outgoing solution to
    $\Delta e^n + \omega^2 e^n = -\mathcal{S}^nf$,
    then
    Lemma \ref{le:outgoing} 
    implies that $e^{n+1}$ is the outgoing solution to $\Delta e^{n+1} + \omega^2 e^{n+1} = -\mathcal{S}^{n+1}f$, since
    $e^{n+1}=\mathcal{S}e^n$ by \eqref{eq:errorS}.
\end{proof}

\subsubsection{Estimate of the error}

Having shown that the error $e^n$ itself is the outgoing solution to the Helmholtz equation \eqref{eq:ErrorEquation} we can study properties of such solutions to understand the behaviour of $e^n$ as $n \rightarrow \infty$. We know by Proposition \ref{prop:errorEq} that $e^n$ satisfies $(\ref{eq:errorEq})$ and by Proposition \ref{prop:extensionS} that 
\begin{equation} \label{eq:SnBeta}
    \widehat{\mathcal{S}^nf}(\xi) = \beta^n(|\xi|)\hat{f}(\xi).
\end{equation}
If $f$, $\beta^n$, and $s$ were to satisfy the relevant assumptions (H1) through (H3) in Section~\ref{sec:HelmholtzEstimate}, Theorem \ref{thm:HEBound} would therefore give us the bound 
\begin{equation} \label{eq:errorBound}
    \|\mathfrak{Re}\{e^n\}\|_{L^2_{-s}(\mathbb{R}^d)}
    \leq C\omega^{2s-2}
    {\mathcal N}_{\omega}(\beta^n)
\|f\|_{L^2_s(\mathbb{R}^d)},
\end{equation}
and similar for the 
$H^1_{-s}$ norm with the
additional assumption (H4).
The assumptions of Theorem \ref{thm:convergence} are precisely that assumptions (H1) and, (H4) for
the $H^1_{-s}$ case, hold, and we will show below that (H2) and (H3) also hold. 

To remove the $\omega$-dependency
of our norms of interest we will work with the rescaled transfer function $\bar{\beta}(r) := \beta(\omega r)$.
With this scaling 
${\mathcal N}_\omega(\beta^n)={\mathcal N}_1(\bar\beta^n)$, and we will demonstrate that for $n \in \mathbb{Z}^+$,
\begin{equation} \label{eq:N1Bound}
{\mathcal N}_1(\bar\beta^n)\leq M n^{-1/2},
\end{equation}
for some constant $M$ independent of $n$. The estimate (\ref{eq:errorBound}) will then imply that 
\begin{equation} \label{eq:finalErrorBound}
\|\mathfrak{Re}\{e^n\} \|_{L^2_{-s}(\mathbb{R}^d)} \leq C\omega^{2s-2}n^{-\frac{1}{2}}\|f\|_{L^2_s(\mathbb{R}^d)},    
\end{equation}
for some constant $C$, and similarly for the $H^1_{-s}$-norm, finally proving Theorem \ref{thm:convergence}. 

It remains to prove \eqref{eq:N1Bound}, which amounts to estimating
$$
\|\bar{\beta}^n\|_{L^\infty(\mathbb{R}\setminus I_\delta)},\qquad
\|\bar{\beta}^n\|_{L^1(\mathbb{R})}
\quad \text{and}\quad
\|D_1\bar{\beta}^n\|_{L^1(0,\delta)}.
$$
To this end we investigate the function $\bar\beta(r)$. It has been shown in \cite{Waveholtz2020} that $\bar{\beta}$ satisfies
\begin{equation} \label{betaEstimates}
|\bar{\beta}(r)|\leq 
\begin{cases}
    1 - \frac{1}{2}(r - 1)^2, & \left|r-1\right| \leq \frac{1}{2},
    \\
    \frac{1}{2} , &|r - 1| \geq \frac{1}{2},
    \\
    \frac{a}{r - 1},& r >\omega,
\end{cases}
\ \ 
\leq\ \  
\begin{cases}
    e^{-\frac{1}{2}(r - 1)^2}, & \left|r-1\right| \leq \frac{1}{2},
    \\
    \frac{1}{2} , &|r - 1| \geq \frac{1}{2},
    \\
    \frac{a}{r - 1},& r >\omega,
\end{cases}    
\end{equation}
where $a$ = $3/4\pi$, for the choice of kernel $K(t)$ as in (\ref{eq:kernel}). To simplify our estimate of the norm $\|D_1\bar{\beta}^n\|_{L^1(0, \delta)}$ we will choose $\delta = 1/4$. 

For the max-norm we then get
from \eqref{betaEstimates},
\begin{equation} \label{eq:betaMax}
\|\bar{\beta}^n\|_{L^\infty(\mathbb{R}\setminus I_\delta)}
\leq
\sup_{|1-r|\geq1/4}
|\bar{\beta}(r)|^n
\leq \left(\frac{31}{32}\right)^n.
\end{equation}
For the $L^1$-norm, the definition (\ref{eq:betaDef}) of $\beta$ shows that $\bar{\beta}$ is an even function, so that, again using \eqref{betaEstimates},
\begin{align}\label{eq:betaL1}
\|\bar{\beta}^n\|_{L^1(\mathbb{R})} &= 2\int_0^\infty |\bar{\beta}^n(r)|\,dr = 2\left(\int_0^\frac{1}{2}|\bar{\beta}^n|dr + \int_\frac{1}{2}^\frac{3}{2}|\bar{\beta}^n|dr + \int_\frac{3}{2}^\infty|\bar{\beta}^n|dr\right) \nonumber\\
&\leq 2\left(\int_0^\frac{1}{2}2^{-n}dr + \int_\frac{1}{2}^\frac{3}{2}e^{-\frac{n}{2}(r - 1)^2} dr + a^n\int_\frac{3}{2}^\infty\frac{1}{(r-1)^n}dr\right)\nonumber \\
&= 2\left(2^{-n-1} + 
n^{-\frac{1}{2}}\int_{\frac{\sqrt{n}}{2}}^{-\frac{\sqrt{n}}{2}}e^{-\frac{r^2}{2}} dr + \left(\frac{a}{2}\right)^n\frac{2}{n-1}\right)
\leq C n^{-\frac{1}{2}},
\end{align}
since $a/2=3/8\pi<1$. In order to estimate the norm $\|D_1\bar{\beta}^n\|_{L^1(0, \delta)}$ we write the function $\bar{\beta}$ on an alternative form. Integration of the definition of $\bar{\beta}(r)$ using the integral (\ref{eq:betaDef}) shows that
\[
\bar{\beta}(r) = \sinc(r+1) + \sinc(r-1) - \frac{1}{2}\sinc(r),
\]
where we use the convention $\sinc(x) = \sin(2\pi x)/2\pi x$. One can rewrite this expression to find the form
\[
\bar{\beta}(r) = \frac{1}{\pi}\sin(2\pi r)\Big(\frac{r}{r^2 - 1} - \frac{1}{4r}\Big),
\]
which means that
\[
\bar{\beta}(1+r) = \sinc(r)\Big(1 + \frac{r^2}{2r^2 + 6 r + 4}\Big).
\]
We now define the symmetric part of $\bar\beta$, centered around one,
as
\[
\bar{\beta}_{\rm sym}(r) = \sinc(r)\Big(1 + \frac{r^2}{2r^2 + 4}\Big),
\]
for which $\bar{\beta}_{\rm sym}(r)=\bar{\beta}_{\rm sym}(-r)$.
Since $|\sin(x)| \leq |x-x^3/3\pi|$
for $|x|\leq\pi/2$, we have 
that
\[
|\sinc(x)| \leq 1 - \frac{4\pi}{3}x^2,
\]
which gives an estimate of $\bar\beta_{\rm sym}$,
$$
|\bar{\beta}_{\rm sym}(r)|
\leq \left(1 - \frac{4\pi}{3}r^2\right)
\Big(1 + \frac{r^2}{4}\Big)
\leq e^{-\frac{4\pi}{3} r^2}
 e^{\frac{1}{4} r^2}
\leq e^{\left(-\frac{4\pi}{3}+\frac14\right) r^2}.
 $$
We finally let $w$ be the ratio
between $\bar\beta$ and $\bar\beta_{\rm sym}$,
$$
w(r) = \frac{\bar{\beta}(1+r)}{\bar{\beta}_{\rm sym}(r)}
= 1-r^3 Q(r), 
\qquad Q(r) = \frac{6}{(3r^2+4)(2r^3+6r+4)},
$$
where $Q$ is a smooth function
on $(-\delta,\delta)$.
One can check that when $|r|\leq \delta=1/4$, 
$$
|Q(r)| \leq  \frac{6}{4(4-6\delta)}\leq 1,
\qquad
|w'(r)| = 3r^2|Q(r)|+|r|^3|Q'(r)|
\leq c_\delta r^2,
$$
where $c_\delta=3+\delta \max_{|r|\leq \delta} |Q'(r)|$. Then
\begin{align}\label{eq:betaDL1}
\int_0^\delta |D_1 \bar{\beta}^n(r)|dr &=
\int_0^\delta \left|\frac{\bar{\beta}^{n}(1+r) - \bar{\beta}(1-r)}{r}\right|dr 
= \int_0^\delta|\bar{\beta}_{\rm sym}^n(r)|\Big|\frac{w(r)^n - w(-r)^n}{r}\Big|dr
\nonumber\\
&\leq \frac12\int_0^\delta |\bar{\beta}_{\rm sym}^n(r)|\sup_{|s| \leq r}n|w'(s)||w(s)|^{n-1}dr
\nonumber\\
&\leq \frac{c_\delta}2\int_0^\delta e^{\left(-\frac{4\pi}{3}+\frac14\right) nr^2}\sup_{|s| \leq r}ns^2|(1+|s|^3)|^{n-1}dr
\nonumber\\
&\leq \frac{c_\delta}2 n\int_0^\delta e^{\left(-\frac{4\pi}{3}+\frac14\right) nr^2}r^2e^{(n-1)r^2\delta}dr
\nonumber\\
&\leq \frac{c_\delta}2 n^{-1/2}\int_0^{\infty} e^{\left(-\frac{4\pi}{3}+\frac14+\delta\right) r^2}r^2dr = C n^{-1/2}.
\end{align}
Together, 
\eqref{eq:betaMax},
\eqref{eq:betaL1} and
\eqref{eq:betaDL1} prove \eqref{eq:N1Bound}. These estimates and \eqref{eq:SnBeta} also show that assumptions (H2) and (H3) indeed hold, justifying the argument made above. We can therefore conclude that the estimate (\ref{eq:finalErrorBound}) holds, which completes the proof of Theorem \ref{thm:convergence}. 

\section{Estimate of the Helmholtz Equation} \label{sec:HelmholtzEstimate}

In this section we prove a result which provides a bound for the weighted norms of solutions to the Helmholtz equation \eqref{eq:HEF} in terms of a weighted norm of the forcing function $F$. The result requires the following assumptions:
\begin{enumerate}[leftmargin=*,labelindent=1.5em]
    \item[(H1)] $s > \frac{3}{2}$ and $\omega \geq 1$.
    \item[(H2)] The forcing function $F$ satisfies  
    \[\hat{F}(\xi) = \hat{h}(|\xi|)\hat{f}(\xi)
    \] with $\hat{h}$ a real-valued function in $L^1(\mathbb{R})\cap L^\infty(\mathbb{R})$ and $f$ a real-valued function in $L^2_{s}(\mathbb{R}^d)$.
    \item[(H3)] There is some $0 < \delta < 1$ such that the function $\hat{h}$ satisfies
    \[
    \|D_\omega \hat{h}\|_{L^1(0, \delta\omega)} < \infty,
    \]
    where $D_\omega g$ denotes the function
    \[
    D_\omega g(x) = \frac{g(\omega+x) - g(\omega - x)}{2x}.
    \]
\end{enumerate}

The estimate comes in two forms, one for the case where assumptions (H1) through (H3) hold, and another for the case where the following stronger regularity condition on $f$ also holds:
\begin{enumerate}[leftmargin=*,labelindent=1.5em]
    \item[(H4)] $f \in H^1_s(\mathbb{R}^d)$.
\end{enumerate}
In what follows we let $I_\delta = (1-\delta, 1+\delta)$, so that $\omega I_\delta = (\omega - \omega \delta, \omega + \omega\delta)$. Using this notation we have
\begin{theorem} \label{thm:HEBound}
    Suppose the assumptions (H1)-(H3) given above hold. 
    Let
    $$
    {\mathcal N}_\omega(\hat{h})
:=\omega^{-1}\|\hat{h}\|_{L^1(\mathbb{R})} + \|D_\omega\hat{h}\|_{L^1(0, \omega\delta)}
+ \|\hat{h}\|_{L^\infty(\mathbb{R}\setminus \omega I_\delta)}.
    $$
    Then if $\alpha > 0$ and $\omega \geq 1$, the solution $u$ to 
    \begin{equation} \label{eq:HEF}
    \Delta u + \omega^2u +i\alpha\omega u = F,    
    \end{equation}
    satisfies
    \begin{equation*}
    \|\mathfrak{Re}\{u\}\|_{L^2_{-s}(\mathbb{R}^d)} \leq C\omega^{2s-2}{\mathcal N}_\omega(\hat{h})\|f\|_{L^2_s(\mathbb{R}^d)},    
    \end{equation*}
    for some constant $C$ independent of $\alpha$ and $\omega$, but dependent on, $\delta$, $d$, and $s$. If in addition assumption (H4)
    holds, then also
    \begin{equation*}
        \|\mathfrak{Re}\{u\}\|_{H^1_{-s}(\mathbb{R}^d)} \leq C\omega^{2s-1}{\mathcal N}_\omega(\hat{h})\|f\|_{H^1_s(\mathbb{R}^d)},
    \end{equation*}
    for the same constant $C$. The same bounds hold for outgoing solutions in the case $\alpha = 0$. 
\end{theorem}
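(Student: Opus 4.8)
The plan is to pass to the Fourier side, where the structure of the forcing in (H2) becomes transparent, and to exploit the fact that taking the real part converts the singular resolvent kernel into a principal-value kernel. Writing \eqref{eq:HEF} in Fourier variables gives $\hat u(\xi) = \hat h(|\xi|)\hat f(\xi)/(\omega^2 - |\xi|^2 + i\alpha\omega)$, and since $\hat h$ is real and $f$ is real-valued, a short computation with $\widehat{\mathfrak{Re}\{u\}}(\xi) = \tfrac12\big(\hat u(\xi) + \overline{\hat u(-\xi)}\big)$ yields
\[
\widehat{\mathfrak{Re}\{u\}}(\xi) = \hat h(|\xi|)\,\frac{\omega^2 - |\xi|^2}{(\omega^2 - |\xi|^2)^2 + \alpha^2\omega^2}\,\hat f(\xi).
\]
The crucial gain is that the new multiplier is real and, as $\alpha \to 0^+$, converges to the \emph{principal value} of $(\omega^2 - |\xi|^2)^{-1}$ rather than to a distribution with a singular surface-delta part. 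This is what makes the real part better behaved than $u$ itself, and it is the reason the theorem only asserts a bound on $\mathfrak{Re}\{u\}$.

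Second, I would estimate $\|\mathfrak{Re}\{u\}\|_{L^2_{-s}}$ by duality, using that $(L^2_{-s})^\ast = L^2_s$ under the $L^2$-pairing, so that $\|\mathfrak{Re}\{u\}\|_{L^2_{-s}} = \sup\{|\langle \mathfrak{Re}\{u\},\phi\rangle| : \phi \text{ real},\ \|\phi\|_{L^2_s}\le 1\}$. By Parseval and polar coordinates $\xi = \rho\theta$ this pairing becomes a one-dimensional radial integral
\[
\int_0^\infty \hat h(\rho)\,\frac{\omega^2 - \rho^2}{(\omega^2 - \rho^2)^2 + \alpha^2\omega^2}\,G(\rho)\,d\rho,
\qquad
G(\rho) = \rho^{d-1}\!\int_{S^{d-1}} \hat f(\rho\theta)\,\overline{\hat\phi(\rho\theta)}\,d\theta,
\]
so the problem reduces to controlling the trace-product $G$ and its radial regularity near $\rho = \omega$. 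Here the trace estimate (Proposition \ref{prop:traceEst}) together with the equivalence $f \in L^2_s \iff \hat f \in H^s$ is the key tool: since $s > \tfrac12$ the traces $\hat f(\rho\,\cdot\,)$ and $\hat\phi(\rho\,\cdot\,)$ lie in $L^2(S^{d-1})$, and the stronger requirement $s > \tfrac32$ in (H1) gives $H^{s-1/2}(S^{d-1})$-traces with $s-\tfrac12 > 1$, hence enough smoothness to bound the symmetric radial difference quotient of $G$. I would also rescale $\rho = \omega r$ at this stage; this normalises the resonance to $r = 1$ and, as in the main text, turns $\mathcal N_\omega(\hat h)$ into the scale-invariant quantity built from $\|\hat h\|_{L^1}$, $\|D_\omega\hat h\|_{L^1}$ and $\|\hat h\|_{L^\infty}$.

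Third, I would split the radial integral into the near-resonance part $\rho \in \omega I_\delta$ and the far part $\rho \notin \omega I_\delta$. On the far part the denominator is bounded below by $c_\delta\,\omega^2$, so the multiplier is harmless and the contribution is controlled by the two non-singular terms of $\mathcal N_\omega$, namely $\|\hat h\|_{L^\infty(\mathbb{R}\setminus\omega I_\delta)}$ near the resonance gap and $\omega^{-1}\|\hat h\|_{L^1(\mathbb{R})}$ on the slowly-decaying tail. On the near-resonance part I would write $\omega^2 - \rho^2 = (\omega - \rho)(\omega + \rho) \approx 2\omega(\omega - \rho)$ and use the elementary identity
\[
\mathrm{p.v.}\!\int \frac{\psi(\rho)}{\omega - \rho}\,d\rho = -2\int_0^{\omega\delta} D_\omega\psi(x)\,dx,
\]
which is precisely what makes the symmetric difference operator $D_\omega$ of assumption (H3) appear. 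Applying this to $\psi = \hat h\,G$ and expanding by a Leibniz rule for $D_\omega$, the $D_\omega\hat h$ factor supplies the $\|D_\omega\hat h\|_{L^1(0,\omega\delta)}$ term of $\mathcal N_\omega$ (multiplied by the sup of $G$), while the complementary factor $D_\omega G$ is absorbed by the trace regularity established above. Tracking the powers of $\omega$ from the Jacobian $\rho^{d-1}$, the factor $\tfrac1{2\omega}$ in the kernel, and the weights $\langle\cdot\rangle^{\pm s}$ yields the stated prefactor $\omega^{2s-2}$; the $H^1_{-s}$ bound follows identically after inserting the Fourier symbol $2\pi i\xi_j$ of a derivative, which costs one extra power $|\xi|\sim\omega$ (hence $\omega^{2s-1}$) and uses (H4) to keep the traces of $\xi_j\hat f$ under control. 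Since every bound is uniform in $\alpha$, the case $\alpha = 0$ follows by passing to the limit via Proposition \ref{prop:LAP}.

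I expect the main obstacle to be the near-resonance estimate: making the principal-value identity rigorous for the $\alpha$-regularised kernel uniformly in $\alpha$, and showing that the radial symmetric difference quotient of the trace-product $G$ is genuinely integrable near $\rho = \omega$ with a bound by $\|f\|_{L^2_s}\|\phi\|_{L^2_s}$. This is exactly the step where the hypothesis $s > \tfrac32$ is indispensable, since one needs the sphere-traces to depend on the radius $\rho$ in a Lipschitz-like fashion, and where the careful bookkeeping of $\omega$-powers must be carried out to obtain the sharp exponent $2s-2$.
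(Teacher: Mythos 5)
Your proposal follows essentially the same route as the paper's proof: Fourier transform and duality against real test functions in $L^2_s$, polar coordinates reducing to a radial integral against the spherical trace-product $G$, trace estimates (requiring $s>\tfrac32$ for the radial difference quotient of $G$), a near/far split at $\omega I_\delta$ with the real part of the regularised kernel producing the symmetric difference operator $D_\omega$ applied to $\hat h\,G$ via a Leibniz rule, rescaling to normalise the resonance to $r=1$, and the limiting absorption principle for $\alpha=0$. The identity $\widehat{\mathfrak{Re}\{u\}}=\hat h\,\tfrac{\omega^2-|\xi|^2}{(\omega^2-|\xi|^2)^2+\alpha^2\omega^2}\hat f$ and the principal-value identity for $D_\omega$ are both correct and correspond exactly to the paper's treatment of the term $I_3$, so the plan is sound and matches the published argument.
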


\begin{remark}
    Agmon shows a
    similar estimate in \cite{Agmon}, 
    for general $F\in L^{2}_s({\mathbb R}^d)$,
    namely that for $s > \frac{1}{2}$ one can find $C \in \mathbb{R}$ depending on $\omega^2$, $d$, and $s$ such that
    \[
    \|u\|_{H^2_{-s}(\mathbb{R}^d)} \leq C\|F\|_{L^2_s(\mathbb{R}^d)}
    \]
    for any outgoing solution $u$ satisfying the Helmholtz problem 
    \[
    \Delta u + \omega^2u = F.
    \]
    With $F$ of the type considered
    in Theorem~\ref{thm:HEBound},
    this result, along with the error equation of Proposition \ref{prop:errorEq} produces
    \[
    \|e^n\|_{H^2_{-s}(\mathbb{R}^d)} \leq \|\mathcal{S}^nf\|_{L^2_s(\mathbb{R}^d)},
    \]
    an expression one might expect to be sufficient to show convergence of the Waveholtz method. However, the right-hand side of this relation does not converge to zero in
    general, as one can see for instance in the case $d = s = 1$, where
    \[
\|\mathcal{S}^nf\|_{L^2_1(\mathbb{R})} = \|\mathcal{S}^nf\langle x \rangle\|^2_{L^2(\mathbb{R})}=\|\mathcal{S}^nf\|^2_{L^2(\mathbb{R})} + \big\||x|^2\mathcal{S}^nf\big\|^2_{L^2(\mathbb{R})}.
    \]
    The first of these terms converges
    to zero since
    the $L^2$-norm of $\beta^n$ converges
    to zero,
    \[
    \|\mathcal{S}^nf\|_{L^2(\mathbb{R})} = \|\beta^n(|\xi|)\hat{f}(\xi)\|_{L^2(\mathbb{R})} \leq \|\beta^n\|_{L^2(\mathbb{R})}\|\hat{f}\|_{L^\infty(\mathbb{R})}\to 0.
    \]
    When investigating the second term $\big\||x|^2\mathcal{S}^nf\big\|_{L^2(\mathbb{R})}$, however, we find that it
    does not approach zero as $n \to \infty$. 
    Its norm can be understood in terms of the norm of derivatives of the Fourier transform,
    \[
    \big\||x|^2\mathcal{S}^nf\big\|_{L^2(\mathbb{R})}= \|\partial_\xi^2\big(\beta^n(|\xi|)\hat{f}(\xi)\big)\|_{L^2(\mathbb{R})}.
    \]
    The fundamental difference from the first term
    involving just $\mathcal{S}^nf$ is that the $L^p$-norms of the derivatives of $\beta^n$ do not approach zero as $n\to \infty$. 
        The same thing happens
    with other choices of $s$.
    This stands in contrast to what we have seen above, namely that $\|D_\omega\beta^n\|_{L^1(0, \omega\delta)} \to 0$ with $n$. 
\end{remark}

\begin{subsection}{
Proof of Theorem~\ref{thm:HEBound}}

The proof is given in four steps. First, in Section~\ref{sec:ThmProofStep1}, we consider just $\omega=1$ and $f\in L^1_{s}(\mathbb{R}^d)$, proving Theorem~\ref{thm:HEBound} for this case. Second, we extend the proof to the case when $f\in H^1_{s}(\mathbb{R}^d)$ in Section~\ref{sec:ThmProofStep2}. Third, in Section~\ref{sec:hfest}, we use scaling arguments to convert the proof for $\omega=1$ to a proof with general $\omega\geq 1$. This gives frequency-explicit estimates. Last, we prove the final statement in the theorem about outgoing solutions in Section~\ref{sec:ThmProofStep4}.

\begin{subsubsection}{An estimate for the case $\omega = 1$}
\label{sec:ThmProofStep1}

Suppose that 
\[
\Delta u + u + i\alpha u = F
\]
where $F$ and $\alpha$ is as described in Theorem \ref{thm:HEBound}
with $\omega=1$. The plan is to find a bound for some weighted norm of $\mathfrak{Re}\{u\}$ by the following observation.
Let $(\,\cdot\,,\,\cdot\,)$
be the usual $L^2$ inner product.
Suppose we found an estimate of the type $\big|(\mathfrak{Re}\{u\}, v)\big| \leq M\|v\|_{L^2_s(\mathbb{R}^d)}$, where
$v$ is real-valued and  $M$ is some constant independent of $v$. 
Choosing $v = \langle x\rangle^{-2s}\mathfrak{Re}\{u\}$ would then produce a bound for a weighted norm of $\mathfrak{Re}\{u\}$:
\begin{equation} \label{eq:estimateReU}
    \|\mathfrak{Re}\{u\}\|_{L^2_{-s}(\mathbb{R}^d)}^2 
    =
    \big|(\mathfrak{Re}\{u\}, \mathfrak{Re}\{u\}\langle x \rangle^{-2s})\big| 
    \leq M\|v\|_{L^2_s(\mathbb{R}^d)} = M\|\mathfrak{Re}\{u\}\|_{L^2_{-s}},
\end{equation}
so that in turn $\|\mathfrak{Re}\{u\}\|_{L^2_{-s}} \leq M$. Such an estimate is our goal. We suppose therefore $v$ to be real-valued and sufficiently regular to Fourier transform. Then $(\mathfrak{Re}\{u\}, v) = \mathfrak{Re}\{(u, v)\} = \mathfrak{Re}\{(\hat{u}, \hat{v})\}$, 
and an estimate of the type \eqref{eq:estimateReU} could be found by investigating this last inner product.
We therefore start by computing 
the Fourier transform of the 
Helmholtz solution $u$,
\[
\hat{u}(\xi) = \frac{\hat{F}(\xi)}{1 - |\xi|^2 + i\alpha},
\]
which permits us to write
\begin{equation} \label{eq:uvInnerProduct}
(\hat{u}, \hat{v}) = \int_{\mathbb{R}^d}\frac{\hat{F}\hat{v}^*(\xi)}{1 - |\xi|^2 + i\alpha}d\xi.
    \end{equation}
Introducing the
set $\Omega_\delta =\{x\in{\mathbb R}^d
: \bigl||\xi|-1\bigr|< \delta\}$
we divide the integral in \eqref{eq:uvInnerProduct}
into three parts 
$(\hat{u}, \hat{v}) =I_1+I_2+I_3$,
where
\[
I_1 := \int_{{\mathbb R}^d\setminus\Omega_\delta}\frac{\hat{F}\hat{v}^*(\xi)}{1 - |\xi|^2 + i\alpha}d\xi,
\]
and
\[
I_2 := \int_{\Omega_\delta}\frac{\hat{F}\hat{v}^*(\xi)}{1 - |\xi|^2 + i\alpha}d\xi-I_3,
\qquad
I_3 :=  \int_{\Omega_\delta}\frac{\hat{F}\hat{v}^*(\xi)}{-2(|\xi|-1) + i\alpha}d\xi.
\]
We will next find upper bounds for the absolute value of each of these integrals in turn, which will allow us to find an upper bound for $|\mathfrak{Re}\{(\hat{u}, \hat{v})\}|$. 
 
We start with $I_1$.
Since $|1 - |\xi|^2|\geq \delta$ outside $\Omega_d$, we have
\begin{align}
    |I_1| &\leq 
    \frac{1}{\delta}
    \int_{{\mathbb R}^d\setminus\Omega_\delta}|
    \hat{h}(|\xi|)\hat{f}(\xi)\hat{v}(\xi)|d\xi
    \leq 
    \frac{1}{\delta}
    ||\hat{h}||_{L^{\infty}({\mathbb R}\setminus I_\delta)}
    \int_{{\mathbb R}^d\setminus\Omega_\delta}|\hat{f}(\xi)\hat{v}(\xi)|d\xi
    \nonumber
    \\
    &\leq 
    \frac{1}{\delta}
    ||\hat{h}||_{L^{\infty}({\mathbb 
    R}\setminus I_\delta)}
    ||\hat{f}||_{L^{2}({\mathbb 
    R}^d)}
    ||\hat{v}||_{L^{2}({\mathbb 
    R}^d)}
=    \frac{1}{\delta}
    ||\hat{h}||_{L^{\infty}({\mathbb 
    R}\setminus I_\delta)}
    ||f||_{L^{2}({\mathbb 
    R}^d)}
    ||{v}||_{L^{2}({\mathbb 
    R}^d)}.
\label{eq:I_1Result}
\end{align}
For $I_2$ and $I_3$ we will use spherical coordinates and therefore introduce some new notation. Let
\[
g(r) = \begin{cases}
\hat{f}(r)\hat{v}^*(r) + \hat{f}(-r)\hat{v}^*(-r), &d = 1,
\\
\int_{|\eta| = 1} \hat{f}(r\eta)\hat{v}^*(r\eta) d\eta,  &d > 1.
\end{cases}
\]
Then we can write
\[
I_2 := \int_{I_\delta}\frac{\hat{h}(r)g(r)}{1 - r^2 + i\alpha}r^{d-1}dr - I_3,\qquad
I_3 := \int_{I_\delta} \frac{\hat{h}(r)g(r)}{-2(r - 1) + i\alpha}r^{d-1}dr.
\]
Before estimating these integrals,
we prove a Lemma about the function $g(r)$.

\begin{lemma} \label{lemma:gLemma}
    The following holds for the function $g$ defined above.
    \begin{enumerate}[label=(\roman*),leftmargin=*]
        \item $g$ is real-valued if $f$ and $v$ are real-valued.
        \item If $s > \frac{1}{2}$, then $$|g(r)| \leq {K}_1 r^{-d}\langle r\rangle^{2s}\|f\|_{L^2_s(\mathbb{R}^d)}\|v\|_{L^2_s(\mathbb{R}^d)},$$ with $K_1$ independent of $r$, $v$, and $f$, but dependent on $d$ and $s$.
        \item If $s > \frac{1}{2}$, then $$|g'(r)| \leq {K}_2r^{-d-1}\langle r\rangle^{2s+2}\|f\|_{L^2_{s+1}(\mathbb{R}^d)}\|v\|_{L^2_{s+1}(\mathbb{R}^d)},$$ again with $K_2$ independent of $r$, $v$, and $f$, but dependent on $d$ and $s$.
    \end{enumerate}
\end{lemma}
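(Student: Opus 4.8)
The plan is to transfer the whole estimate to the Fourier side, where the weighted spaces become ordinary Sobolev spaces and the spherical quantity $g(r)$ becomes the restriction of $\hat f\,\overline{\hat v}$ to the sphere of radius $r$, and then to control these restrictions with the trace inequality of Proposition~\ref{prop:traceEst}. The key fact I would use repeatedly is that, dually to \eqref{eq:SobolevWeightedLpEquiv} and the remark following it, $\|\,\cdot\,\|_{L^2_s(\mathbb{R}^d)}=\|\widehat{\,\cdot\,}\|_{H^s(\mathbb{R}^d)}$; hence the hypotheses $f,v\in L^2_s$ (resp. $L^2_{s+1}$) are exactly the statements $\hat f,\hat v\in H^s$ (resp. $H^{s+1}$) in the Fourier variable. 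Part (i) is then a symmetry computation: for real $f,v$ one has $\hat f(-\xi)=\overline{\hat f(\xi)}$ and $\hat v(-\xi)=\overline{\hat v(\xi)}$, so symmetrizing the integrand of $g$ under $\eta\mapsto-\eta$ (the two points $\pm r$ when $d=1$) turns it into $2\,\mathfrak{Re}\{\hat f(r\eta)\overline{\hat v(r\eta)}\}$, which is real.

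The technical core is a single trace-with-scaling estimate, which I would isolate as a claim: for $s>\tfrac12$ and $w\in H^s(\mathbb{R}^d)$,
\[
\Big(\int_{|\eta|=1}|w(r\eta)|^2\,d\eta\Big)^{1/2}\le C\,r^{-d/2}\langle r\rangle^{s}\,\|w\|_{H^s(\mathbb{R}^d)},\qquad r>0,
\]
with $C$ depending only on $d$ and $s$. Granting this, the remaining parts are short. For (ii), Cauchy--Schwarz on the unit sphere gives $|g(r)|\le(\int_{|\eta|=1}|\hat f(r\eta)|^2)^{1/2}(\int_{|\eta|=1}|\hat v(r\eta)|^2)^{1/2}$, and two applications of the claim yield $|g(r)|\le C^2 r^{-d}\langle r\rangle^{2s}\|\hat f\|_{H^s}\|\hat v\|_{H^s}=C^2 r^{-d}\langle r\rangle^{2s}\|f\|_{L^2_s}\|v\|_{L^2_s}$, which is precisely (ii). For (iii) I would differentiate under the integral, so that $g'(r)$ is a sum of spherical integrals of $\eta\cdot\nabla\hat f(r\eta)\,\overline{\hat v(r\eta)}$ and $\hat f(r\eta)\,\overline{\eta\cdot\nabla\hat v(r\eta)}$; applying the claim to each component of $\nabla\hat f$ (which lies in $H^s$ with $\|\nabla\hat f\|_{H^s}\le\|\hat f\|_{H^{s+1}}=\|f\|_{L^2_{s+1}}$) and to $\hat v$, and symmetrically, gives $|g'(r)|\le C'\,r^{-d}\langle r\rangle^{2s}\|f\|_{L^2_{s+1}}\|v\|_{L^2_{s+1}}$. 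Since $r^{-1}+r\ge2$ for $r>0$, one has $r^{-d}\langle r\rangle^{2s}\le r^{-d-1}\langle r\rangle^{2s+2}$, so this already implies the stated (iii).

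It remains to prove the claim, and this is where I expect the real work to be. Writing $W(\zeta):=w(r\zeta)$, the change of variables $\xi=r\zeta$ identifies $\int_{|\eta|=1}|w(r\eta)|^2\,d\eta$ with the squared $L^2(S^{d-1})$-norm of the restriction of $W$ to the unit sphere; since $L^2(S^{d-1})\supset H^{s-1/2}(S^{d-1})$ continuously for $s>\tfrac12$, Proposition~\ref{prop:traceEst} applied on the unit ball (whose boundary is $S^{d-1}$) bounds this by $K\|\phi W\|_{H^s(\mathbb{R}^d)}$, where $\phi$ is a fixed cutoff equal to $1$ on the unit ball. The crux is then the dilation scaling of $\|\phi W\|_{H^s}$ under $\zeta\mapsto r\zeta$: differentiation produces factors $r^{|\gamma|}$ while the change of variables produces an $r^{-d}$ over the shell $\{|\xi|\sim r\}$, giving $r^{s-d/2}$ when $r\ge1$ and $r^{-d/2}$ when $r\le1$, that is $r^{-d/2}\langle r\rangle^{s}$. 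I would carry this out by a direct Leibniz computation for integer $s$ and then obtain general $s$ by interpolation of the resulting (fixed-$r$) operator bounds. The delicate points are keeping the constants uniform across the two regimes $r\le1$ and $r\ge1$ and across non-integer $s$; the case $d=1$, where the ``sphere'' is the two points $\pm r$, is special and I would instead invoke the Sobolev embedding $H^s(\mathbb{R})\hookrightarrow C^0(\mathbb{R})$ for $s>\tfrac12$ to control $|w(\pm r)|$.
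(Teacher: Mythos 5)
Your proof is correct and follows the same architecture as the paper's: Cauchy--Schwarz on the unit sphere, the trace estimate of Proposition~\ref{prop:traceEst} applied to the dilated Fourier transforms, and a dilation-scaling bound of the form $\|\hat f(r\,\cdot)\|_{H^s(\mathbb{R}^d)}\leq C r^{-d/2}\langle r\rangle^s\|f\|_{L^2_s(\mathbb{R}^d)}$, with the Sobolev embedding $H^s\hookrightarrow C^0$ replacing the trace argument when $d=1$. The one place where you genuinely diverge is in how that scaling bound is established. You propose a Leibniz computation for integer $s$ followed by operator interpolation to reach non-integer $s$, and you correctly flag this as the delicate step; it works, but it imports interpolation machinery and uniformity checks across the regimes $r\leq 1$ and $r\geq 1$. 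The paper avoids all of this with the exact identity $\hat f(r\,\cdot)=r^{-d}\,\widehat{f(\cdot/r)}$ combined with the duality $\|\widehat{g}\|_{H^s(\mathbb{R}^d)}=\|g\|_{L^2_s(\mathbb{R}^d)}$: the $H^s$ norm of the dilated transform becomes a weighted $L^2$ norm of $f(\cdot/r)$ on the physical side, and the elementary inequality $\langle x\rangle^{2s}\leq\langle r\rangle^{2s}\langle x/r\rangle^{2s}$ after a change of variables gives the claim for every real $s>\tfrac12$ in one line. Your bookkeeping for part \emph{(iii)} also differs slightly --- you apply the trace claim to the components of $\nabla\hat f$ directly rather than to the dilated gradient, obtaining the intermediate bound $r^{-d}\langle r\rangle^{2s}$ and then passing to the stated $r^{-d-1}\langle r\rangle^{2s+2}$ via $r\leq\langle r\rangle^2$; this is a marginally sharper route and entirely consistent with the lemma as stated. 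In short: no gap, but if you adopt the Fourier-duality trick for the scaling step your proof shortens considerably and sidesteps the interpolation argument altogether.
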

\begin{proof}
We first note that this function $g$ is indeed real-valued, due to the assumptions that $f$ and $v$ are real-valued. In the case $d = 1$ we have 
\[
g^*(r) = \hat{f}^*(r)\hat{v}(r) + \hat{f}^*(-r)\hat{v}(-r) = \hat{f}(-r)\hat{v}^*(-r) + \hat{f}(r)\hat{v}^*(r) = g(r),
\]
since the assumption that $f$ and $v$ are real-valued means that $\hat{f}^*(r) = \hat{f}(-r)$, and similarly for $\hat{v}$. If $d > 1$ we have, by symmetry of the sphere,
\begin{align*}
g(r) &= \frac12\int_{|\eta| = 1} \hat{f}(r\eta)\hat{v}^*(r\eta) 
+\hat{f}(-r\eta)\hat{v}^*(-r\eta) 
d\xi \\
&=\frac12\int_{|\eta| = 1} \hat{f}(r\eta)\hat{v}^*(r\eta) 
+\hat{f}(r\eta)^*\hat{v}(r\eta) 
d\eta
=\int_{|\eta| = 1} \mathfrak {Re}\{\hat{f}(r\eta)\hat{v}^*(r\eta)\}
d\eta.
\end{align*}
This proves {\it (i)}.
We now estimate $|g(r)|$. 
In the case $d > 1$ we denote by $B^d$ the set of points $\xi$ in $\mathbb{R}^d$ such that $|\xi| < 1$, and by $\tau : B^d \longrightarrow \partial B^d$ the trace map, so that 
\begin{align} \label{eq:gInfEst}
    |g(r)| \leq \int_{|\eta| = 1} |\hat{f}\hat{v}^*(r\eta)|d\eta \leq  \|\tau\hat{f}(r\,\cdot)\|_{L^2(\partial B^d)}\|\tau\hat{v}(r\,\cdot)\|_{L^2(\partial B^d)}.
\end{align}
We can then use Proposition \ref{prop:traceEst} to find 
\begin{align*}
    \|\tau\hat{f}(r\,\cdot)\|_{L^2(\partial B^d)} \leq \|\tau \hat{f}(r\,\cdot) \|_{H^{s-\frac{1}{2}}(\partial B^d)} \leq K\|\hat{f}(r\,\cdot)\|_{H^{s}(B^d)} \leq K\|\hat{f}(r\,\cdot)\|_{H^{s}(\mathbb{R}^d)},
\end{align*}
provided $s > \frac{1}{2}$. Now using the convention \eqref{eq:SobolevWeightedLpEquiv} we see that $\|\ \widehat{\cdot}\ \|_{H^s(\mathbb{R}^d)} = \| \cdot \|_{L^2_s(\mathbb{R}^d)}$ and 
\[
\hat{f}(r\,\cdot)= r^{-d}\widehat{f\left(\frac{\cdot}{r}\right)},
\]
we see that 
\[
\|\tau\hat
{f}(r\,\cdot)\|_{L^2(\partial B^d)} \leq K\|\hat{f}(r\,\cdot)\|_{H^s(\mathbb{R}^d)} = Kr^{-d}\left\|f\left(\frac{\cdot}{r}\right)\right\|_{L^2_s(\mathbb{R}^d)} \leq Kr^{-\frac{d}{2}}\langle r\rangle^s
\|f\|_{L^2_s(\mathbb{R}^d)},
\]
and similar for $\|\tau\hat
{v}(r\,\cdot)\|_{L^2(\partial B^d)}$, which in light of \eqref{eq:gInfEst} means that 
\begin{align*}
    |g(r)| &\leq K^2 r^{-d}\langle r\rangle^{2s}\|f\|_{L^2_s(\mathbb{R}^d)}\|v\|_{L^2_s(\mathbb{R}^d)}.
\end{align*}
For $d = 1$, we instead note that $H^s(B^1)$ is continuously embedded in $C^0(B^1)$ for $s > \frac{1}{2}$; see \cite[Theorem 9.8]{LionsMagenes}. This means that 
\begin{align*}
    |g(r)| &\leq 2\|\hat{f}(r\,\cdot)\|_{L^\infty(B^1)}\|\hat{v}(r\,\cdot)\|_{L^\infty(B^1)} 
    \\
    &\leq \bar{K}r^{-2}\Big\|f\Big(\frac{\cdot}{r}\Big)\Big\|_{L^2_s(\mathbb{R})}\Big\|v\Big(\frac{\cdot}{r}\Big)\Big\|_{L^2_s(\mathbb{R})} \leq \bar{K}r^{-1}\langle r\rangle^{2s}\|f\|_{L^2_s(\mathbb{R})}\|v\|_{L^2_s(\mathbb{R})},
\end{align*}
for some $\bar{K}$ and such $s$, by arguments similar to the ones used in the higher-dimensional case. 
Denoting by ${K}_1$ the constants found in the estimates above for the different values of $d$, we have proved {\it (ii)}.
Finally, for {\it (iii)} we similarly have
\begin{align*}
    |g'(r)| &= \Big| \partial_r\int_{|\eta| = 1} \hat{f}(r\eta)\hat{v}^*(r\eta)d\eta \Big|
    \\
    &\leq 
    \frac1r
    \int_{|\eta| = 1} \big|\eta\cdot\nabla_\eta\hat{f}(r\eta)\big|\big|\hat{v}^*(r\eta)\big|d\eta + 
    \frac1r
    \int_{|\eta| = 1} \big|\hat{f}(r\eta)
    \big|\big|\eta\cdot\nabla_\eta
    \hat{v}^*(r\eta)\big|d\eta,
\end{align*}
and these integrals can be estimated in the same way as \eqref{eq:gInfEst}, producing
\begin{align*}
    |g'| &\leq \frac{K^2}{r}\big( \|\nabla_\eta\hat{f}(r\,\cdot)\|_{H^s(\mathbb{R}^d)}\|\hat{v}^*(r\,\cdot)\|_{H^s(\mathbb{R}^d)} 
+ \|\hat{f}(r\,\cdot)\|_{H^s(\mathbb{R}^d)}
\|\nabla_\eta\hat{v}^*(r\,\cdot)\|_{H^s(\mathbb{R}^d)}\big)
    \\
    &\leq \frac{2K^2}{r} \|\hat{f}(r\,\cdot)\|_{H^{s+1}(\mathbb{R}^d)}\|\hat{v}(r\,\cdot)\|_{H^{s+1}(\mathbb{R}^d)}
    \leq 2K^2r^{-d-1}\langle r\rangle^{2s+2}\|f\|_{L^2_{s+1}(\mathbb{R}^d)}\|v\|_{L^2_{s+1}(\mathbb{R}^d)},
\end{align*}
for $s > \frac{1}{2}$. Again use  \cite[Theorem 9.8]{LionsMagenes} in the one-dimensional case, this time for the function $g'(r)$, producing
\begin{align*}
|g'(r)| &\leq \frac1r \|\partial_\eta\hat{f}(r\,\cdot)\|_{L^\infty(B^1)}\|\hat{v}(r\,\cdot)\|_{L^\infty(B^1)} + \frac1r \|\hat{f}(r\,\cdot)\|_{L^\infty(B^1)}\|\partial_\eta\hat{v}(r\,\cdot)\|_{L^\infty(B^1)} 
\\
&\leq \bar{K}r^{-2}\langle r\rangle^{2s+s}\|f\|_{L^2_{s+1}(\mathbb{R})}\|v\|_{L^2_{s+1}(\mathbb{R})},
\end{align*}
by an argument near-identical to the one for $d > 1$. Denoting by ${K}_2$ the constants found above, we have shown the estimate in the statement of the lemma, for all $d$. 
\end{proof}
\noindent We now turn our attention to estimating the remaining integrals $I_2$, and $I_3$.
For $I_2$ we recall that
\[
I_2 = \int_{I_\delta}\Big(\frac{\hat{h}(r)g(r)}{1 - r^2 + i\alpha} - \frac{\hat{h}(r)g(r)}{-2(r - 1) + i\alpha} \Big)r^{d-1}dr,
\]
and investigate how we can simplify this fraction. We have
\begin{align*}
    \left|\frac{1}{1 -r^2 + i\alpha} - \frac{1}{-2(r - 1) + i\alpha} \right| &= \left| \frac{-2(r - 1) + i\alpha - \big(1 - |r^2 + i\alpha\big)}{(1 - r^2 + i\alpha)(-2(r - 1) + i\alpha)}\right|
    \\
    &\leq \frac{1}{2}\frac{(1 - r)^2}{\big|1 - r^2\big|\big|r - 1\big|} = \frac{1}{2}\frac{1}{r + 1}.
\end{align*}
This means that by also using Lemma~\ref{lemma:gLemma},
\begin{align}
    |I_2| &\leq \int_{I_\delta} \frac{|\hat{h}(r)g(r)|}{2(r + 1)}r^{d-1}dr \leq {K}_1\|f\|_{L^2_s(\mathbb{R}^d)}\|v\|_{L^2_s(\mathbb{R}^d)}\int_{I_\delta}|\hat{h}(r)|\frac{
    \langle r\rangle^{2s}}{r(r+1)}dr \nonumber
    \\
    &\leq B\|f\|_{L^2_s(\mathbb{R}^d)}\|v\|_{L^2_s(\mathbb{R}^d)}\|\hat{h}\|_{L^1(\mathbb{R})},  \label{eq:I_2Result}   
\end{align}
where $B \in \mathbb{R}$ is some constant which is independent of $\alpha$, but depends on $\delta$ and $s$. Finally, for $I_3$ we note that 
    \[
    \mathfrak{Re}\{I_3\} = \mathfrak{Re}\Big\{\int_{I_\delta} \frac{\hat{h}(r)g(r)}{-2(r - 1) + i\alpha}r^{d-1}dr\Big\} = -\frac{1}{2}\int_{1-\delta}^{1+\delta}\frac{\hat{h}(r)g(r)(r-1)}{(r-1)^2 + \frac
    {\alpha^2}{4}}r^{d-1}dr, 
    \]
    since $\hat{h}$ and $g$ are real-valued. This is due to (H2) and Lemma \ref{lemma:gLemma}, as $f$ and $v$ are real-valued. After a suitable change of variables this can be written as
    \[
    \mathfrak{Re}\{I_3\} = -\frac{1}{2}\int_0^\delta \frac{(1+r)^{d-1}\hat{h}(1+r)g(1 + r) - (1-r)^{d-1}\hat{h}(1-r)g(1-r)}{r^2 + \frac{\alpha^2}{4}}rdr.
    \]
    We find
    \begin{align}
    |\mathfrak{Re}\{I_3\}| &\leq \int_0^\delta \bigg|\frac{(1+r)^{d-1}\hat{h}(1+r)g(1 + r) - (1-r)^{d-1}\hat{h}(1-r)g(1-r)}{2r}\bigg|dr \nonumber
    \\
    &\leq \int_0^\delta \big|(1+r)^{d-1}g(1+r)D_1\hat{h}(r)\big| dr \nonumber
    \\
    &\qquad\qquad\qquad+ \int_0^\delta \big|\hat{h}(1-r)g(1+r)D_1\big(t^{d-1}\big)(r)\big| dr \nonumber
    \\
    &\qquad\qquad\qquad\qquad\qquad+ \int_0^\delta\big|\hat{h}(1-r)(1-r)^{d-1}D_1g(r)\big|dr  \nonumber\\
    &\leq
    C\|g\|_{L^\infty(I_\delta)} 
    \int_0^\delta 
    \left(\big|D_1\hat{h}(r)\big| + \big|\hat{h}(1-r)\big|\right) dr \nonumber
    \\
    &\qquad\qquad\qquad\qquad\qquad+ 
    \|D_1g(r)\|_{L^\infty(0, \delta)} 
    \int_0^\delta\big|\hat{h}(1-r)\big|dr.  
    \label{eq:I_3estimate}
    \end{align}
    Each of these terms can be estimated using the results in Lemma \ref{lemma:gLemma}. In particular, noting that if $s>1/2$, then
    \begin{align*}
        \|g\|_{L^\infty( I_\delta)} &= \sup_{r\in I_\delta}|g(r)| \leq \sup_{r\in I_\delta}{K}_1 r^{-d}\langle r\rangle^{2s}\|f\|_{L^2_s(\mathbb{R}^d)}\|v\|_{L^2_s(\mathbb{R}^d)} 
        \\
        \|D_1g\|_{L^\infty(0, \delta)} &\leq \|g'(r)\|_{L^\infty(I_\delta)} \leq \sup_{r\in I_\delta}\,{K}_2r^{-d-1}\langle r \rangle^{2s+2}\|f\|_{L^2_{s+1}(\mathbb{R})}\|v\|_{L^2_{s+1}(\mathbb{R})},
    \end{align*}
    and \eqref{eq:I_3estimate} tells us that for such $s$
    \begin{equation} \label{eq:I_3Result}
        |\mathfrak{Re}\{I_3\}| \leq \big(C_1\|\hat{h}\|_{L^1(\mathbb{R})} + C_2\|D_1\hat{h}\|_{L^1(0, \delta)}\big)\|f\|_{L^2_{s+1}(\mathbb{R}^d)}\|v\|_{L^2_{s+1}(\mathbb{R}^d)},
    \end{equation}
    where $C_1, C_2 \in \mathbb{R}^+$ are some real numbers independent of $\alpha$, but dependent on $\delta$, $s$ and $d$. Now, bringing together the results \eqref{eq:I_1Result}, \eqref{eq:I_2Result}, \eqref{eq:I_3Result}, and denoting by $C$ the maximum over the constants in these results, produces
    \begin{align*}
    |\mathfrak{Re}\{(\hat{u}, \hat{v})\}| &\leq C\big(\|\hat{h}\|_{L^1(\mathbb{R})} + \|D_1\hat{h}\|_{L^1(0, \delta)}    +\|\hat{h}\|_{L^\infty(\mathbb{R}\setminus I_\delta)} 
    \big)\|f\|_{L^2_{s'}(\mathbb{R}^d)}\|v\|_{L^2_{s'}(\mathbb{R}^d)},
    \\ &=
    C {\mathcal N}_1(\hat{h})
    \|f\|_{L^2_{s'}(\mathbb{R}^d)}\|v\|_{L^2_{s'}(\mathbb{R}^d)},
    \end{align*}
    when $s'>3/2$. Choosing $v(x) = \langle x \rangle^{-2s'}\mathfrak{Re}\{u\}(x)$ then yields 
\begin{equation*}
\|\mathfrak{Re}\{u\}\|^2_{L^2_{-s'}(\mathbb{R}^d)} \leq C{\mathcal N}_1(\hat{h})\|f\|_{L^2_{s'}(\mathbb{R}^d)}\|\mathfrak{Re}\{u\}\|_{L^2_{-s'}(\mathbb{R}^d)}.
\end{equation*}
This implies
\[
\|\mathfrak{Re}\{u\}\|_{L^2_{-s}(\mathbb{R)}} \leq C{\mathcal N}_1(\hat{h})\|f\|_{L^2_{s}(\mathbb{R}^d)},
\]
which proves Theorem \ref{thm:HEBound} for $\omega = 1$ in the case where assumptions (H1) through (H3) hold.

\end{subsubsection}

\subsubsection{
Extension to the $H^1_s(\mathbb{R}^d)$-norm}
\label{sec:EstimateH1}

\label{sec:ThmProofStep2}

We now consider the case where in addition to the assumptions (H1)--(H3), we add the assumption (H4) that $f \in H^1_s(\mathbb{R}^d)$. Suppose again that $u$ is the unique solution of
\[
\Delta u + u +i\alpha u= F.
\]
We then consider the function
$u_j = \partial_{x_j}u$
which is the unique solution to
\begin{equation} \label{eq:newHE}
\Delta u_j + u_j +i\alpha u_j = \partial_{x_j}F.
\end{equation}
Moreover, since $\hat{F}(\xi) = \hat{h}(|\xi|)\hat{f}(\xi)$, we get
\[
\widehat{\partial_{x_j}F}(\xi) = i\xi_j\hat{h}(|\xi|)\hat{f}(\xi) = \hat
h(|\xi|)\big(i\xi_j\hat{f}(\xi)\big)
= h(|\xi|)\widehat{\partial_{x_j}f}(\xi).
\]
We can then apply Theorem \ref{thm:HEBound} to equation $(\ref{eq:newHE})$ to find
\begin{align*}
    \|\mathfrak{Re}\{\partial_{x_j}u\}\|_{L^2_{-s}(\mathbb{R}^d)} &= \|\partial_{x_j}\mathfrak{Re}\{u\}\|_{L^2_{-s}(\mathbb{R}^d)}
    \leq C{\mathcal N}_1(\hat{h})\|\partial_{x_j}f\|_{L^2_s(\mathbb{R}^d)}, 
\end{align*}
provided that $\partial_{x_j}f \in L^2_s(\mathbb{R}^d)$. Since we have assumed $f \in H^1_s(\mathbb{R}^d)$ we find
\begin{align*}
    \|\mathfrak{Re}\{u\}\|_{H^1_{-s}(\mathbb{R}^d)} &= \Big(\sum_{|\alpha| \leq 1}\|\partial^\alpha\mathfrak{Re}\{u\}\|^2_{L^2_{-s}(\mathbb{R}^d)}\Big)^{\frac{1}{2}}
\leq C{\mathcal N}_1(\hat{h})\|f\|_{H^1_s(\mathbb{R}^d)}.
    \end{align*}
This shows the bound in the case that assumptions (H1) through (H4) hold.

\begin{subsubsection}{The case of $\omega \geq 1$} \label{sec:hfest}

    We aim now to extend the above result to the case of general $\omega\geq 1$ in a manner that makes it clear how the estimate scales with increasing $\omega$. Suppose therefore that $u$ solves the equation
    \begin{equation} \label{eq:HEnonzeroOmega}
    \Delta u + \omega^2u + i\alpha\omega u = F,    
    \end{equation}
    for some $\omega \geq 1$ and that assumptions (H1) through (H3) hold for this $F$. Then defining $\tilde{u}(x) := u(\omega^{-1}x)$, $\tilde{\alpha} := \omega^{-1}\alpha$, and $G(x) := \omega^{-2}F(\omega^{-1}x)$ produces 
    \begin{align*}
        \Delta \tilde{u}(x) + \tilde{u}(x) + i\tilde{\alpha}\tilde{u}(x) &= \omega^{-2}\big(\Delta u(\omega^{-1}x) + \omega^2u(\omega^{-1}x) + i\alpha \omega u(\omega^{-1}x)\big) 
        \\
        &= \omega^{-2}F(\omega^{-1}x) = G(x).
    \end{align*}
    That is, the function $\tilde{u}(x)$ satisfies
    the Helmholtz equation \eqref{eq:HEF} with $\omega=1$ and
    \[
    \hat{G}(\xi) = \omega^{d-2}\hat{F}(\omega\xi) = \omega^{d-2}\hat{h}(\omega|\xi|)\hat{f}(\omega\xi) = \hat{k}(|\xi|)\hat{g}(\xi),
    \]
    where we defined $\hat{k}(|\xi|) = \hat{h}(\omega|\xi|)$ and $\hat{g}(\xi) = \omega^{d-2}\hat{f}(\omega\xi)$, or equivalently $g(x) = \omega^{-2}f(\omega^{-1}x)$. Because of the assumptions made on the function $F$, the function $G$ also satisfies assumptions (H1) through (H3), as $\|D_1\hat{k}\|_{L^1(0, \delta)} = \|D_\omega\hat{h}\|_{L^1(0, \omega\delta)}$.
    In fact, $
    {\mathcal N}_{1}(\hat{k})
    =
    {\mathcal N}_{\omega}(\hat{h})$
    since
    $$
    \|\hat{k}\|_{L^1(\mathbb{R})} = \omega^{-1}\|\hat{h}\|_{L^1(\mathbb{R})},
    \qquad
    \|\hat{k}\|_{L^\infty(\mathbb{R}\setminus I_\delta)} = \|\hat{h}\|_{L^\infty(\mathbb{R}\setminus \omega I_\delta)}.
    $$
In light of the above calculation, 
and since it is proven for $\omega=1$,
we can therefore apply Theorem \ref{thm:HEBound} to the function $\tilde{u}$, producing the estimate
    \[
    \|\mathfrak{Re}\{\tilde{u}\}\|_{L^2_{-s}(\mathbb{R}^d)} \leq 
    C{\mathcal N}_{\omega}(\hat{h})\|g\|_{L^2_{s}(\mathbb{R}^d)},
    \]
    with $C$ independent of $\omega$ and $\alpha$. Now since $\omega \geq 1$, 
    \begin{equation*}
        \|\mathfrak{Re}\{\tilde{u}\}\|_{L^2_{-s}(\mathbb{R}^d)} = \Big(\omega^d\int_{\mathbb{R}^d}|\mathfrak{Re}\{u\}|^2\langle\omega x\rangle^{-2s}dx\Big)^{\frac{1}{2}} \geq \omega^{\frac{d}{2}-s}\|\mathfrak{Re}\{u\}\|_{L^2_{-s}(\mathbb{R}^d)},
    \end{equation*}
    as for $\omega \geq 1$ we have $\omega\langle x\rangle \geq \langle \omega x\rangle$, and by assumption $s > \frac{1}{2}$. For those same reasons we find 
    \begin{equation}\label{eq:gest}
    \|g\|_{L^2_{s}(\mathbb{R}^d)} = \Big(\omega^{d-4}\int_{\mathbb{R}^d}|f(x)|^2\langle\omega x\rangle^{2s}dx\Big)^{\frac{1}{2}} \leq \omega^{\frac{d}{2}+s-2}\|f\|_{L^2_s(\mathbb{R}^d)}, 
    \end{equation}
    so that we finally get
    \begin{align*}
        \|\mathfrak{Re}\{u\}\|_{L^2_{-s}(\mathbb{R}^d)} &\leq \omega^{s-\frac{d}{2}}\|\mathfrak{Re}\{\tilde{u}\}\|_{L^2_{-s}(\mathbb{R}^d)} 
        \leq C\omega^{2s-2}
        {\mathcal N}_{\omega}(\hat{h})\|f\|_{L^2_{s}(\mathbb{R}^d)},
    \end{align*}
    where $C$ is independent of $\omega$ and $\alpha$. In the case that we impose on $F$ also the assumption (H4), we can similarly find
    \begin{align*}
        \|\mathfrak{Re}\{\tilde{u}\}\|_{H^1_{-s}(\mathbb{R}^d)} &= \Big(\sum_{|\alpha| \leq 1} \|\omega^{-|\alpha|}\partial^\alpha\mathfrak{Re}\{u(\omega^{-1}x)\}\|_{L^2_{-s}(\mathbb{R}^d)}^2 \Big)^{\frac{1}{2}}
        \\
        &\geq \omega^{\frac{d}{2}-s-1}\|\mathfrak{Re}\{u\}\|_{H^1_{-s}(\mathbb{R}^d)},
    \end{align*}
    again using the assumption that $\omega \geq 1$. A similar calculation produces the corre\-sponding estimate
    \[
    \|g\|_{H^1_s(\mathbb{R}^d)} \leq \omega^{\frac{d}{2}+s-2}\|f\|_{H^1_s(\mathbb{R}^d)},
    \]
    so that the discussion in Section \ref{sec:EstimateH1} gives 
    \begin{equation}
    \label{eq:omegaDepH1}
        \|\mathfrak{Re}\{u\}\|_{H^1_{-s}(\mathbb{R}^d)} \leq C\omega^{2s-1}
        {\mathcal N}_{\omega}(\hat{h})\|f\|_{H^1_{s}(\mathbb{R}^d)}.
    \end{equation}
    This generalises Theorem \ref{thm:HEBound} to the case $\omega \geq 1$.
\end{subsubsection}

\begin{subsubsection}{The case of outgoing solutions}
\label{sec:ThmProofStep4}

Finally we turn our attention to the case of outgoing solutions $u$ to equation \eqref{eq:HEnonzeroOmega}. We will first consider the case where assumptions (H1) through (H3) hold, as the case where (H4) holds can be done entirely analogously.
    Let $z_\alpha = \omega^2 + i\alpha\omega$, and denote by $R(z)$ the resolvent operator $(-\Delta - z)^{-1}$, which is is known to be well-defined for $z$ not on the positive real axis. Recall Proposition \ref{prop:LAP}, which states that the limit
    \[
    \lim_{\alpha \rightarrow 0^+} R(z_\alpha) =: R(\omega^2)
    \]
    exists as an element of the space of bounded linear operators from $L^{2}_s(\mathbb{R}^d)$ to $H^2_{-s}(\mathbb{R}^d)$, endowed with the uniform operator topology, provided that $s>\frac{1}{2}$. We get for $u_\alpha := -R(z_\alpha)F$ and $u_0 := -R(\omega^2)F$, since 
    $\|\cdot\|_{L^2_{-s}(\mathbb{R}^d)} \leq \|\cdot\|_{H^2_{-s}(\mathbb{R}^d)}$,
    \begin{align*}
    \|\mathfrak{Re}\{u_0 - u_\alpha\}\|_{L^2_{-s}(\mathbb{R}^d)}&\leq
    \|u_0 - u_\alpha\|_{H^2_{-s}(\mathbb{R}^d)} = \|R(z_\alpha)F - R(\omega^2)F\|_{H^2_{-s}(\mathbb{R}^d)} 
    \\
    &\leq \|R(z_\alpha) - R(\omega^2)\|\|F\|_{L^2_{s}(\mathbb{R}^d)} \overset{\alpha \rightarrow 0^+}{\longrightarrow} 0.
    \end{align*}
This implies that  $\mathfrak{Re}\{u_\alpha\} \longrightarrow\mathfrak{Re}\{u_0\}$ as $\alpha \longrightarrow 0^+$, when considered as elements of the space $L^2_{-s}(\mathbb{R}^d)$. Using our previous results we see that 
    \begin{align*}
    \|\mathfrak{Re}\{u_0\}\|_{L^2_{-s}(\mathbb{R}^d)} &= \lim_{\alpha \rightarrow 0^+} \|\mathfrak{Re}\{u_\alpha\}\|_{L^2_{-s}(\mathbb{R}^d)}
    \leq C\omega^{
    2s-2}\mathcal{N}_\omega(\beta^n)\|f\|_{L^2_{s}(\mathbb{R}^d)},
    \end{align*}
    as stated. By the same argument, the corresponding estimate holds for the $H^1_{-s}$-norm of $\mathfrak{Re}\{u_0\}$
    if assumption (H4) hold. This concludes the proof of Theorem \ref{thm:HEBound}.

\end{subsubsection}
\end{subsection}

\section{Conclusion}
We have presented the Waveholtz iteration for the unbounded domain $\mathbb{R}^d$ in the constant-coefficient case, and shown that the real part of the iterates converge as $n^{-\frac{1}{2}}$ in $H^1_{-s}$-norm to the real part of the outgoing solution to the Helmholtz equation, under suitable assumptions on the weight parameter $s$ and forcing $f$. The number of iterations required to achieve a prescribed tolerance has been shown to grow at most as $\omega^{2+}$ with the frequency parameter $\omega$, although numerical experiments suggest that the optimal growth rate is $\omega$. The key points of difference between previous analyses of the Waveholtz method is that the unbounded domain considered here required us to suitably extend the operators $\Pi$ and $\mathcal{S}$, and that the analysis could not make reference to any eigenfunction expansion of the Helmholtz solution, as is possible when considering Waveholtz on a bounded domain
with Dirichlet or Neumann boundary conditions. We instead analysed the iteration in terms of the limiting behaviour of the damped Helmholtz equation, by investigating the Fourier transform of the iterates. It remains open to prove whether the method converges for variable wave speeds.

\backmatter

\begin{appendices}

\section{Extension and boundedness of $\mathcal{S}$ and $\Pi$}\label{sec:A1}

\def\Scal{{\mathcal S}}
\def\Real{\mathbb{R}}  
\def\supp{{\rm supp}}

We aim here to prove Proposition $\ref{prop:extensionS}$
which primarily states that the operator $\mathcal{S}$, which was originally defined on the set $C^\infty_0(\mathbb{R}^d)$, can for any real $s$ and nonnegative $p$ be extended to a bounded linear operator from the weighted Sobolev space $H^p_s(\mathbb{R}^d)$ to itself.

\begin{proof}[Proof of Prop \ref{prop:extensionS}]

We start by proving \eqref{eq:SFourier0}.
This identity is true for
$u \in C^\infty_0(\mathbb{R}^d)$, which we now show in the same
way as in \cite{Waveholtz2020}. The
Fourier transform of $w$ in 
\eqref{eq:Waves} 
with $f=0$
satisfies
an ordinary differential equation for each fixed $\xi$
$$
 \partial^2_t\hat{w}+|\xi|^2\hat{w}=0.
$$
This has the solution
$\hat{w}(t,\xi)=\hat{u}(\xi)\cos(|\xi|t)$
when the initial data is $w(0,x)=u(x)$
and $w_t(0,x)=0$.
Consequently, by 
\eqref{eq:Sdef} and
\eqref{eq:betaDef},
$$
\widehat{\mathcal{S}u}(\xi) = 
\int_0^T K(t) \hat{w}(t,\xi)dt=
\hat{u}(\xi)
\int_0^T K(t) \cos(|\xi|t)dt=
\beta(\xi)\hat{u}(\xi).
$$
Finally, by the boundedness
of $\mathcal{S}$ from $L^2(\mathbb{R}^d)$
to itself, shown below,
the result extends
to all $L^2$-functions.

Next, to prove the extension of
$\mathcal{S}$ to the
weighted Sobolev spaces,
we note that since the solution operator for the wave equation is well-defined between these spaces, the operator $\mathcal{S}$ is a well-defined linear operator from $C^\infty_0(\Real^d)$ to $C^\infty_0(\Real^d)$. As the set $C^\infty_0(\mathbb{R}^d)$ is dense in $H^p_s(\mathbb{R}^d)$ for every real $s$ and non-negative $p$, the Bounded Linear Transformation Theorem implies that demonstrating 
\begin{equation}\label{Hpsestimate}
\|\Scal u\|_{H^p_s(\Real^d)}\leq C \|u\|_{H^p_s(\Real^d)},\qquad\forall
u\in C_0^\infty(\Real^d),
\end{equation}
is enough to prove the 
the claim in the
Proposition \ref{prop:extensionS}. 
Here we recall that
as defined in Section~\ref{sec:prelim},
$$
\|u\|^2_{H^p_s(\Real^d)} =
\sum_{|\alpha|\leq p} \int_{\Real^d} |\partial_x^\alpha u(x)|^2 \langle x\rangle^{2s}dx.
$$
We use the convention $L^2_s(\mathbb{R}^d)=H^0_s(\mathbb{R}^d)$ to include also $L_s^2(\mathbb{R}^d)$, and thus aim to show the bound $\eqref{Hpsestimate}$ for all $s$ and 
non-negative $p$.

For the proof we will need the open cover $\{\Omega_j\}_{j \in \mathbb{N}}$ of $\Real^d$ defined by
$$
\Omega_j =\begin{cases}
\{x\in\Real^d\ |\ |x|< T\}, & j=0,\\
\{x\in\Real^d\ |\ (j-1)T< |x|<(j+1)T\}, &j\geq 1,
\end{cases}
$$
with $T = 2\pi/\omega$, and the extended sets
$$
\Omega^e_j = \begin{cases}
  \Omega_{0}\cup\Omega_1, & j=0,\\
  \Omega_{j-1}\cup\Omega_j\cup\Omega_{j+1}, & j\geq 1.
  \end{cases}
$$
A key property of these sets is that
\begin{equation}\label{finitespeed} 
\text{if
$\supp(u) \subset \Omega_j$ then $\supp(\Scal u) \subset \Omega^e_{j}$,}
\end{equation}
by the finite speed of propagation in the wave equation. Let us begin with the case $s=0$. Take $u\in C_0^\infty(\Real^d)$. By \eqref{eq:SFourier0} and Parseval, one term in the sum for $\|\Scal u\|^2_{H^p_s(\Real^d)}$, can be estimated as
\begin{align}\label{dSterm}
\int_{\Real^d} \big|(\partial_x^\alpha \Scal u)(x)\big|^2dx&=
\int_{\Real^d} |\xi|^{2|\alpha|}|\beta(\xi)\hat{u}(\xi)|^2d\xi
\leq 
\int_{\Real^d} |\xi|^{2|\alpha|}|\hat{u}(\xi)|^2d\xi \nonumber\\
&=\int_{\Real^d} |\partial_x^\alpha u(x)|^2dx,
\end{align}
since $|\beta(\xi)|\leq 1$ for all $\xi$ by \eqref{betaEstimates}. Then \eqref{Hpsestimate} follows for $s=0$. We now consider $H^p_s(\Real^d)$ with $s\neq 0$. Let $\{\phi_j\}$ be a smooth partition of unity subordinate to $\{\Omega_j\}$, such that
$$
\sum_{j=0}^\infty \phi_j(x) = 1, \qquad \supp(\phi_j)\subset \Omega_j.
$$
To simplify the argument that follows we choose in particular $\phi_0$ given by
\[
\phi_0(x) = g\left( \frac{T-\varepsilon-|x|}{T-2\varepsilon}\right),
\]
where $0<\varepsilon<T$ is some fixed real number and $g$ is the smooth transition function defined by
\[
g(x) = \begin{cases}
    0, & x \leq 0, \\
    \frac{e^{-\frac{1}{x}}}{e^{-\frac{1}{x}} + e^{-\frac{1}{1-x}}}, &0<x<1,
    \\
    1, & 1\leq x.
\end{cases}
\]
This function $\phi_0$ is then identically zero outside of the open ball defined by $|x| < T-\varepsilon$, identically one in the closed ball defined by $|x| \in [0, \varepsilon]$, and smoothly transitioning between zero and one in the shell defined by $|x| \in (\varepsilon, T-\varepsilon)$. It can be shown that defining for each $j>0$ the function $\phi_j$ by
\[
\phi_j(x) = \phi_0\big(|x|-jT\big)
\]
produces a smooth partition of unity subordinate to the open cover given by the sets $\Omega_j$. This choice means that for any $j$ and multi-index $\alpha$, 
\begin{equation}\label{derBound}
\| \partial^\alpha \phi_j\|_{L^\infty(\mathbb{R}^d)} \leq \| \phi_0\|_{W^{|\alpha|, \infty}(\mathbb{R}^d)}.    
\end{equation}
That is, the derivatives of $\phi_j$ can be bounded independently of $j$.

Using the partition of unity $\phi_j$, we then have $\supp(u\phi_j)\subset \Omega_j$ and 
$\supp(\Scal u\phi_j)\subset \Omega_j^e$ by \eqref{finitespeed}.
Since $\Omega_j^e\cap\Omega_k^e=\emptyset$ when $|j-k|\geq 4$, it follows that $(\partial_x^\alpha\Scal u\phi_j)(\partial_x^\alpha\Scal u\phi_k)\equiv 0$ when $|j-k|\geq 4$, and consequently
\begin{align*}
|(\partial_x^\alpha \Scal u)(x)|^2&=
\Big|\sum_{j=0}^\infty (\partial_x^\alpha \Scal u\phi_j)(x)\Big|^2\leq
\Big(\sum_{j=0}^\infty | (\partial_x^\alpha \Scal u\phi_j)(x)|\Big)^2\\
&=
\sum_{j=0}^\infty\sum_{|k-j|\leq 3} \big|(\partial_x^\alpha\Scal u\phi_j)(x)\big|\big|(\partial_x^\alpha\Scal u\phi_k)(x)\big|
\\
&\leq 
\frac12\sum_{j=0}^\infty\sum_{|k-j|\leq 3} \big|(\partial_x^\alpha\Scal u\phi_j)(x)\big|^2 + \big|(\partial_x^\alpha\Scal u\phi_k)(x)\big|^2
\leq 
7\sum_{j=0}^\infty \big|(\partial_x^\alpha\Scal u\phi_j)(x)\big|^2.
\end{align*}
Hence,
\begin{align*}
\int_{\Real^d} \big|(\partial_x^\alpha\Scal u)(x)\big|^2\langle x\rangle^{2s}dx
\leq 
7\sum_{j=0}^\infty 
\int_{\Real^d} \big|(\partial_x^\alpha\Scal u\phi_j)(x)\big|^2\langle x\rangle^{2s}dx,
\end{align*}
and using \eqref{dSterm},
\begin{align*}
\lefteqn{
\int_{\Real^d} \big|(\partial_x^\alpha\Scal u\phi_j)(x)\big|^2\langle x\rangle^{2s}dx
=
\int_{\Omega_j^e} \big|(\partial_x^\alpha\Scal u\phi_j)(x)\big|^2\langle x\rangle^{2s}dx} 
\hskip 10 mm &\\
&\leq 
\sup_{x\in\Omega^e_j}\langle x\rangle^{2s}
\int_{\Omega_j^e} \big|(\partial_x^\alpha\Scal u\phi_j)(x)\big|^2dx
\leq
\sup_{x\in\Omega^e_j}\langle x\rangle^{2s}\int_{\Real^d} \big|(\partial_x^\alpha u\phi_j)(x)\big|^2dx
\\
&=
\sup_{x\in\Omega^e_j}\langle x\rangle^{2s}\int_{\Omega_j} \big|(\partial_x^\alpha u\phi_j)(x)\big|^2dx
\leq 
\frac{\sup_{x\in\Omega^e_j}\langle x\rangle^{2s}}{\inf_{x\in\Omega_j}\langle x\rangle^{2s}}\int_{\Omega_j} \big|(\partial_x^\alpha u\phi_j)(x)\big|^2\langle x\rangle^{2s}dx.
\end{align*}
Since $(1+|x|)/\sqrt{2}\leq \langle x\rangle \leq 1+|x|$,
$$
\frac{\sup_{x\in\Omega^e_j}\langle x\rangle^{2s}}{\inf_{x\in\Omega_j}\langle x\rangle^{2s}}
\leq 2^{|s|}\begin{cases}
  \left(\frac{1+(j+2)T}{1+(j-1)T}\right)^{2s} = \left(1+\frac{3T}{1+(j-1)T}\right)^{2|s|}, & s>0,\ j\geq 1\\
  \left(\frac{1+(j-2)T}{1+(j+1)T}\right)^{2s} \leq \left(1+\frac{3T}{1+(j-2)T}\right)^{2|s|}, & s<0,\ j\geq 2\\
  \left(1+2T\right)^{2s} = \left(1+2T\right)^{2|s|}, & s>0,\ j=0\\
  \left(\frac{1}{1+(j+1)T}\right)^{2s} \leq \left(1+2T\right)^{2|s|}, & s<0,\ j\leq 1.
 \end{cases}.
$$
Therefore, we obtain with $C(T,s)=2^{|s|}(1+3T)^{2|s|}$,
\begin{align*}
\int_{\Real^d} \big|(\partial_x^\alpha\Scal u)(x)\big|^2\langle x\rangle^{2s}dx
&\leq 
7C(T,s)\sum_{j=0}^\infty 
\int_{\Omega_j} \big|\big(\partial_x^\alpha( u\phi_j)\big)(x)\big|^2\langle x\rangle^{2s}dx
\\
&\leq 
7C(T, s)2^{|\alpha|}\sum_{|\gamma|\leq|\alpha|}\sum_{j=0}^\infty 
\int_{\Omega_j} |\partial_x^\gamma u(x)|^2|\partial_x^{\alpha-\gamma} \phi_j(x)|^2\langle x\rangle^{2s}dx\\
&\leq
7C(T, s)2^{|\alpha|}\|\phi_0\|_{W^{|\alpha|, \infty}(\mathbb{R}^d)}\sum_{|\gamma|\leq|\alpha|}
\int_{\Real^d} |\partial_x^\gamma u(x)|^2\langle x\rangle^{2s}dx.
\end{align*}
Note that we here needed the property \eqref{derBound} for the final step. The full norm is therefore bounded as
\begin{align*}
\|\Scal u\|^2_{H^p_s(\Real^d)} &\leq 
\sum_{|\alpha|\leq p}
\int_{\Real^d} |(\partial_x^\alpha\Scal u)(x)|^2\langle x\rangle^{2s}dx
\\
&\leq 
7C(T,s)\sum_{|\alpha|\leq p}
\|\phi_0\|_{W^{|\alpha|, \infty}}\sum_{|\gamma|\leq|\alpha|}
\int_{\Real^d} |\partial_x^\gamma u(x)|^2\langle x\rangle^{2s}dx \\
&\leq C\sum_{|\alpha|\leq p}
\int_{\Real^d} |\partial_x^\alpha u(x)|^2\langle x\rangle^{2s}dx
=C\|u\|^2_{H^p_s(\Real^d)}.
\end{align*}
This concludes the proof that $\mathcal{S}$ can be extended continuously. 

We now use this extension of $\mathcal{S}$ to show that $\Pi$ can also be extended to a continuous operator on $H^p_s(\mathbb{R}^d)$ such that,
whenever $s > \frac{1}{2}$ and $f \in L^2_s(\mathbb{R}^d)$,
\begin{equation} 
\label{affineS}
\mathcal{S}v = \mathcal{S}(v-u) + u,    
\end{equation}
where $u$ is the outgoing solution to
the Helmholtz equation \eqref{eq:HE}. Note that these requirements on $s$ and $f$ are necessary to guarantee the existence of $u$. We begin by noting that the original definition of $\Pi$ shows that for $v \in C^\infty_0(\mathbb{R}^d)$,
\[
\Pi v = \mathcal{S}v + \Pi0.
\]
This expression is then used to extend $\Pi$ to any domain that $\mathcal{S}$ has been extended to. It remains
to show that the extension satisfies \eqref{affineS}. We therefore investigate $\Pi 0$. By definition it is given by
\[
\Pi 0 = \int_0^T K(t)w(x, t)dt,
\]
where $w$ solves the initial-value problem
\begin{align}
\label{eq:waveSystem}
    \partial_t^2w  &= \Delta w -F(x, t),& \qquad (x, t) &\in \mathbb{R}^d \times (0, T), \nonumber\\
    w(x, 0) &= w_0(x),& x&\in \mathbb{R}^d,
    \\
    \partial_t w(x, 0) &= 0,& x&\in \mathbb{R}^d,
    \nonumber
\end{align}
for $F = f(x)\cos(\omega t)$ and $w_0 = 0$, which is well-posed since $f$ is assumed to be in  $L^2(\mathbb{R}^d)$. Thus, $\Pi 0$ is well-defined. To derive an expression for $\Pi 0$ we consider the limiting absorption principle and solutions 
$u_\alpha$ to the damped Helmholtz problem \eqref{eq:HEF}. One can note that for $\omega_\alpha := \sqrt{\omega^2 + i\alpha\omega}$ the function $r(x, t) = u_\alpha(x)\mathfrak{Re}\{\exp(i\omega_\alpha t)\}$ solves the problem \eqref{eq:waveSystem} with $w_0 = u_\alpha$ and $F = f(x)\mathfrak{Re}\{\exp(i\omega_\alpha t)\}$. If we finally denote by $v(x,t)$ the solution to $\eqref{eq:waveSystem}$ for $F = 0$ and $w_0 = u_\alpha$, we can combine these observations to see that, for all $\alpha>0$,
\begin{align*}
    \Pi0 &= \int_0^T K(t)wdt = \int_0^T K(t)\big(w + (r-v) - (r-v) \big)dt \\
    &=\int_0^T K(t)rdt - \mathcal{S}u_\alpha + \int_0^T K(t)(w-r+v)dt.
\end{align*}
For the final equality we used the fact that $\int_0^T K(t)vdt = \mathcal{S}u_\alpha$, by definition of $\mathcal{S}$. We now aim to see how this expression behaves as $\alpha \to 0^+$. As $\mathcal{S}$ has been extended continuously, the term $\mathcal{S}u_\alpha$ approaches $\mathcal{S}u$ as $\alpha \to 0^+$, where $u$ is the outgoing solution to the Helmholtz equation \eqref{eq:HE}. Moreover, the first term satisfies
\begin{align*}
\lim_{\alpha\to 0^+} \int_0^TK(t)rdt &= \lim_{\alpha \to 0^+} u_\alpha(x)\int_0^T K(t)\exp(-\mathfrak{Im}\{w_\alpha\}t)\cos(\mathfrak{Re}\{\omega_\alpha\}t)dt
\\
&= u(x) \int_0^T K(t)\lim_{\alpha \to 0^+}\Big(\exp(-\mathfrak{Im}\{w_\alpha\}t)\cos(\mathfrak{Re}\{\omega_\alpha\}t) \Big)dt
\\
&= u(x)\int_0^TK(t)\cos(\omega t)dt = u(x),
\end{align*}
where the exchange of the limit and the integral is justified by use of the Dominated Convergence Theorem, with the dominating function $K(t)$. If we denote by $q$ the function $w - r + v$, we see that $q$ solves the problem \eqref{eq:waveSystem} for $F(x, t) = f(x)\mathfrak{Re}\{\exp(i\omega t)-\exp(i \omega_\alpha t)\}$ and $w_0 = 0$. This function $F$ approaches zero in the $L^2$ sense, so that we by the standard estimate
\[
\|q(\cdot, t)\|_{H^1(\mathbb{R^d})} \leq C\int^t_0 \|F(\cdot, \tau)\|_{L^2(\mathbb{R}^d)}d\tau,
\]
know that for every $t \in (0, T)$, $q \to 0$ in $L^2$ as $\alpha \to 0^+$, which also means that the term $\int_0^TK(t)qdt \to 0$ in $L^2$ as $\alpha \to 0^+$. We conclude that
\[
\Pi 0 = \lim_{\alpha \to 0^+} \Big( \int_0^TK(t)rdt - \mathcal{S}u_\alpha + \int_0^T K(t)qdt \Big) = u - \mathcal{S}u,
\]
so that we can continuously extend $\Pi$ by
\[
\Pi v = \mathcal{S}v + \Pi 0 = \mathcal{S}(v-u) + u
\]
for $v \in H^p_s(\mathbb{R}^d)$.
\end{proof}

\end{appendices}

\bibliography{sn-bibliography}


\end{document}